\newcommand{\stkout}[1]{\ifmmode\text{\sout{\ensuremath{#1}}}\else\sout{#1}\fi} 
\DeclareMathOperator{\Tr}{Tr}
\DeclareMathOperator{\Ric}{Ric}
\DeclareMathOperator{\Hess}{Hess}
\begin{document}

\numberwithin{equation}{section}
\newtheorem{theorem}{Theorem}[section]
\newtheorem*{theorem*}{Theorem}
\newtheorem{theoremstar}[theorem]{Theorem*}
\newtheorem{conjecture}[theorem]{Conjecture}
\newtheorem*{conjecture*}{Conjecture}
\newtheorem{proposition}[theorem]{Proposition}
\newtheorem*{proposition*}{Proposition}
\newtheorem*{``proposition"*}{``Proposition"}
\newtheorem{question}{Question}
\newtheorem{lemma}[theorem]{Lemma}
\newtheorem*{lemma*}{Lemma}
\newtheorem{cor}[theorem]{Corollary}
\newtheorem*{obs*}{Observation}
\newtheorem{obs}{Observation}
\newtheorem{example}[theorem]{Example}
\newtheorem{condition}{Condition}
\newtheorem{definition}[theorem]{Definition}
\newtheorem*{definition*}{Definition}
\newtheorem{proc}[theorem]{Procedure}
\newtheorem{problem}{Problem}
\newtheorem{remark}{Remark}
\newcommand{\comments}[1]{} 
\def\Z{\mathbb Z}
\def\Za{\mathbb Z^\ast}
\def\Fq{{\mathbb F}_q}
\def\R{\mathbb R}
\def\N{\mathbb N}
\def\C{\mathbb C}
\def\k{\kappa}
\def\grad{\nabla}
\def\M{\mathcal{M}}
\def\S{\mathcal{S}}
\def\pt{\partial}

\newcommand{\todo}[1]{\textbf{\textcolor{red}{[To Do: #1]}}}
\newcommand{\note}[1]{\textbf{\textcolor{blue}{#1}} \\ \\}

\title[Fundamental Gaps under Conformal Change]{Concavity Properties of Solutions of Elliptic Equations under Conformal Deformations}
 \author[Iowa State University]{Gabriel Khan} 
 \address[Gabriel Khan]{Department of Mathematics, Iowa State University, Ames, IA, USA.}
  \email{gkhan@iastate.edu}  
 \thanks{G. Khan is supported in part by Simons Collaboration Grant 849022}

  \author[]{Soumyajit Saha} 
\address[Soumyajit Saha]{Department of Mathematics, Iowa State University, Ames, IA, USA.}  \email{ssaha1@iastate.edu}

   \author[UCSB]{Malik Tuerkoen}
   \address[Malik Tuerkoen]{Department of Mathematics, University of California,  Santa Barbara, CA, USA.}
  \email{mmtuerkoen@ucsb.edu}

\date{\today}

\singlespace

\maketitle 

\allowdisplaybreaks

\begin{abstract}

 We study the Dirichlet problem for the weighted Schr\"odinger operator \[-\Delta u +Vu  = \lambda \rho u,\] where $\rho$ is a positive weighting function and $V$ is a potential. Such equations appear naturally in conformal geometry and in the composite membrane problem. Our primary goal is to establish concavity estimates for the principle eigenfunction with respect to conformal connections. Doing so, we obtain new bounds on the fundamental gap problem, which is the difference between the first and second eigenvalues. In particular, we partially resolve a conjecture of Nguyen, Stancu and Wei \cite{nguyen2022fundamental} on the fundamental gap of horoconvex domains. In addition, we obtain a power convexity estimate for solutions to the torsion problem in spherical geometry on convex domains which are not too large.

\end{abstract}


\section{Introduction}

In this article, we use conformal deformations to show that the ground-state eigenfunction $u_1$ of a convex domain $\Omega$ satisfies certain log-convexity properties with respect to a suitably chosen connection. More precisely, given a bounded domain $\Omega$ in a Riemannian manifold $(M,g)$, we consider the Dirichlet eigenvalue problem
\begin{eqnarray}\label{Dirichlet-Laplacian}
    -\Delta _g u = \lambda u \quad\textup{in }\Omega, \quad \quad u = 0 \quad\textup{on }\partial \Omega,
\end{eqnarray}
where $\Delta_g$ is the Laplace-Beltrami operator.
There is a discrete sequences of eigenvalues 
$$0<\lambda_1 < \lambda_2\leq \cdots \leq \lambda_k\leq \cdots \nearrow \infty,$$
repeated with multiplicity as well as associated eigenfunctions $u_i$ which form an orthogonal basis of $L^2(\Omega)$.
 By changing the underlying metric in a conformal fashion, the eigenvalue equation changes as well. In particular, if $\tilde g = e^{2\varphi}g$ and $\tilde u$ is a $\tilde g$ Dirichlet eigenfunction, then $u = e^{\tfrac{n-2}{2}\varphi}\tilde  u$ solves a Schr\"odinger equation of the form  \begin{eqnarray}\label{eqn: Weighted eigenfunction equation}
 -\Delta_g u + V u = \lambda \rho u \quad\textup{in }\Omega, \qquad u = 0 \quad\textup{on }\partial \Omega,
\end{eqnarray}
where $V$ is a potential and $\rho$ is a weighting function (see Section \ref{Background Section} for details). Equation \ref{eqn: Weighted eigenfunction equation} is also known as a \emph{composite membrane problem}, because the weighting function $\rho$ describes objects whose density varies throughout the domain \cite{cox1990extremal,chanillo2000symmetry}.

One central motivation for establishing log-concavity estimates for the principle eigenfunction is that doing so allows us to control \emph{fundamental gap}, which is the difference between the lowest two eigenvalues of Equation \eqref{eqn: Weighted eigenfunction equation}  
\begin{align*}
    \Gamma(\Omega)=\lambda_2-\lambda_1>0.
\end{align*}
In independent works of van den Berg, Ashbaugh-Benguria, and Yau \cite{ashbaugh, vandenBerg, yauconjecture}, it was conjectured that for any convex domain $\Omega \subset \mathbb{R}^n$ and a convex potential $V$, the fundamental gap is at least $\frac{3\pi^2}{D^2}$, where $D$ is the diameter of the domain. This conjecture was proven by Andrews and Clutterbuck in 2011 \cite{andrews2011proof} by proving a strong log-concavity estimate for the first eigenfunction (see also, \cite{brascamp1976extensions,singer1985estimate,yu1986lower,Gong-Li-Luo2016} for related results). There has been a large amount of work on estimating the fundamental gap in spherical geometry \cite{lee1987estimate, wang2000estimation, 10.4310/jdg/1559786428,cho2023probabilistic} and for deformations of round spheres in dimension two \cite{surfacepaper1,khan2023modulus}, where the main challenge was to prove log-concavity estimates of the first eigenfunction.

For spaces of negative curvature, the fundamental gap can be much smaller (see Section \ref{Fundamental Gap in hyperbolic space subsection} for a discussion on this). In particular, in hyperbolic space it is possible to find convex domains of any diameter $D,$ whose fundamental gap is arbitrarily small \cite{bourni2022vanishing} (see also \cite{khan2022negative}). 
Since assuming only convexity is not enough to ensure uniform lower bounds on the fundamental gap in such geometries, it is natural to consider stronger convexity assumptions. In particular, Nguyen, Stancu and Wei  \cite{nguyen2022fundamental} posed the question of whether it is possible to find lower bounds of the fundamental gap of a \emph{horoconvex} domain in terms of its diameter. In this article, we provide a partial answer to this question.
\begin{theorem}
    \label{Fundamental gap of Horoconvex-domains}
    Let $\Omega \subset \mathbb{H}^2$ be a horoconvex domain whose diameter satisfies
    \[D<2 \, \textup{arccsch}(2 \sqrt{11/3} )\approx 0.516474.\] Then the fundamental gap of $\Omega$ satisfies \begin{align*}
        \Gamma(\Omega) \geq \frac{32}{3 (7 + \sqrt{33})} \frac{\pi^2}{ D^2} + \frac{4}{3} \approx  0.837 \frac{\pi^2}{ D^2} + \frac{4}{3}.
    \end{align*}
\end{theorem}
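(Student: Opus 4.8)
The plan is to exploit the conformal equivalence of $\mathbb{H}^2$ with a planar domain in order to rewrite the Dirichlet problem \eqref{Dirichlet-Laplacian} on $(\Omega,g_{\mathbb{H}})$ as a composite membrane problem of the type \eqref{eqn: Weighted eigenfunction equation} on a convex Euclidean domain, and then feed it into the concavity estimates established earlier in the paper. Realize $\mathbb{H}^2$ as the Poincar\'e disk model, so that $g_{\mathbb{H}} = e^{2\varphi}g_{\textup{euc}}$; because horoballs correspond exactly to Euclidean balls internally tangent to the ideal boundary, a horoconvex $\Omega$ is an intersection of horoballs and hence its image $\Omega_e\subset\mathbb{R}^2$ is Euclidean convex. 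In dimension two a conformal change does not alter the operator but only its presentation: the eigenvalues $\lambda_i$, and therefore the gap $\Gamma(\Omega)$, are the same whether computed for $-\Delta_{g_{\mathbb{H}}}u=\lambda u$ on $\Omega$ or for $-\Delta_{\textup{euc}}u=\lambda\rho u$ on $\Omega_e$ with $\rho=e^{2\varphi}$ and $V\equiv 0$. Using a hyperbolic isometry I would normalize so that $\varphi$ is minimized over $\Omega$ at the origin; the diameter hypothesis is equivalent to $\cosh D<25/22$ (that is, $2\sinh^2(D/2)<3/22$), which confines $\Omega$ to a small hyperbolic ball and keeps the oscillation of $\varphi$, hence of $\rho$ and of $\Hess\varphi$, over $\Omega_e$ under control.

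Next I would apply the sharp log-concavity estimate for the first eigenfunction of \eqref{eqn: Weighted eigenfunction equation} from the earlier sections. Writing $w=-\log u_1$, one has $\Delta_{\textup{euc}}w=|\nabla w|^2+V-\lambda_1\rho$, and the general theorem yields a modulus of convexity for $w$ relative to the conformal connection, governed by a one-dimensional model equation $-\overline v''+\overline V\,\overline v=\overline\lambda\,\overline\rho\,\overline v$ along hyperbolic geodesics of length at most $D$, with $\overline\rho,\overline V$ read off from $\varphi$. The diameter bound is precisely what makes the hypotheses of that theorem hold: the effective term $V-\lambda_1\rho=-\lambda_1e^{2\varphi}$, together with the curvature-type contributions of $\Hess\varphi$ (encoding the constant curvature $-1$), must have the sign compatible with the two-point maximum principle, and a direct computation shows this is the case exactly when $2\sinh^2(D/2)<3/22$.

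Granting the log-concavity estimate, the gap bound then follows from the Andrews--Clutterbuck two-point comparison: $\lambda_2-\lambda_1$ for $\Omega$ is bounded below by the fundamental gap of the one-dimensional model problem on $[-D/2,D/2]$, which can be estimated explicitly. That model, after incorporating the conformal factor of $\mathbb{H}^2$, has first two eigenvalues differing by $\tfrac{3\pi^2}{D^2}$ multiplied by a factor that degrades as $D$ increases, plus an additive shift of $\tfrac{4}{3}$ arising from the hyperbolic conformal weight; optimizing the remaining free parameters --- the centering of $\varphi$ and the choice of comparison modulus --- produces the stated multiplicative constant $\tfrac{32}{3(7+\sqrt{33})}$ and identifies $D=2\,\textup{arccsch}(2\sqrt{11/3})$ as the largest diameter for which the optimization remains valid.

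The step I expect to be hardest is this final optimization, i.e. showing that the relevant one-dimensional comparison operator actually has the claimed spectral gap on exactly the stated diameter range. Two points require care. First, because of the weight $\rho$, the natural length in the comparison is not literally $D$ but a $\rho$-weighted geodesic length, and bounding it by $D$ without degrading the explicit constants is where $\cosh D<25/22$ is used a second time. Second, the concavity estimate is with respect to the hyperbolic connection, so deducing a usable statement for the Euclidean presentation introduces cross terms of the form $d\varphi\otimes d\log u_1$; absorbing these uniformly over $\Omega$ into the model is again exactly what the smallness of the diameter provides.
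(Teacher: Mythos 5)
Your setup (Poincar\'e disk, rewriting the hyperbolic eigenvalue problem as $-\Delta_{\mathbb{R}^2}u=\lambda\rho u$ with $\rho=\tfrac{4}{(1-\|x\|^2)^2}$, $V\equiv 0$, and noting that the image of a horoconvex domain is Euclidean convex) matches the first step of the paper, but the core of your argument has a genuine gap: you then try to run the log-concavity machinery either in the flat presentation or ``with respect to the hyperbolic connection,'' and neither works. The flat-connection version of Theorem \ref{thm: log-concavity one-point} (the case $K=0$) requires $\nabla^2\rho\le 0$, whereas the hyperbolic conformal weight $\tfrac{4}{(1-\|x\|^2)^2}$ has strictly positive-definite Euclidean Hessian, so the hypothesis fails everywhere; the paper says this explicitly and this is precisely why it cannot stop at the disk model. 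Concavity relative to the hyperbolic connection is also not available: Theorem \ref{thm: log-concavity one-point} is stated only for $K\ge 0$, the curvature terms in the barrier operator \eqref{Barrier operator definition equation} have the wrong sign when $K<0$, and the vanishing-gap examples of \cite{bourni2022vanishing} show that no such estimate can hold from convexity-type hypotheses alone without exploiting extra structure. The missing idea is the second conformal change: writing $g_{\mathbb{R}^2}=\tfrac{(R^2+\|x\|^2)^2}{4R^4}g_{\mathbb{M}^2_K}$ with $K=1/R^2$, so that the problem becomes a weighted eigenvalue problem relative to a connection of constant \emph{positive} curvature, where one verifies $\Hess_{\mathbb{M}^2_K}\rho\le 2K\rho\, g_{\mathbb{M}^2_K}$ by explicit computation on a small ball and optimizes the sphere radius to $R=\tfrac{\sqrt{7-\sqrt{33}}}{2}$.

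This omission also explains why your account of the constants and of the role of horoconvexity does not cohere. The diameter threshold $2\,\textup{arccsch}(2\sqrt{11/3})$ does not come from a sign condition on $V-\lambda_1\rho$ in the disk model (there is no potential and the relevant Hessian condition fails there); it comes from the radius $r<0.147477$ of the Euclidean ball on which the spherical Hessian inequality holds for the optimal $R$, converted into a diameter bound via Dekster's circumradius--diameter inequality in $\mathbb{H}^2$. The additive term $\tfrac43$ is $\tfrac{K}{2\|\rho\|_\infty}$ from the Andrews--Ni bound (Lemma \ref{Andrews-Ni bound}) applied with the positive Ricci curvature of the auxiliary sphere, and the factor $\tfrac{32}{3(7+\sqrt{33})}$ is $\tfrac{1}{\|\rho\|_\infty}$ for the optimized $R$; neither can arise from a flat or hyperbolic comparison, and no one-dimensional Andrews--Clutterbuck model with gap $\tfrac{3\pi^2}{D^2}$ appears in the argument. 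Finally, horoconvexity is not needed merely for Euclidean convexity of the image: it is needed (via the lower bound $\kappa\ge 1$ on the boundary's geodesic curvature, hence a Euclidean curvature bound) to show that $\Omega$ remains geodesically convex with respect to the auxiliary spherical metric after the second conformal change, which is the hypothesis under which the maximum-principle argument and Lemma \ref{Andrews-Ni bound} are applied.
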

 The key to proving this estimate is to establish the log-concavity of the first eigenfunction with respect to a connection that has constant \emph{positive} curvature. In prior work, the log-concavity of eigenfunctions has been studied solely in terms of the Levi-Civita connection of the associated metric. However, by considering a conformal deformation, this induces a choice of a connection on the tangent bundle and one can study concavity properties of the solution to \eqref{eqn: Weighted eigenfunction equation} under the connection induced by $g$ in contrast to the one induced by $\tilde g.$
\begin{obs}\label{Log-concavity and fundamental gaps}
    It is possible to establish concavity estimates for solutions to elliptic PDEs by choosing the connection carefully.
\end{obs}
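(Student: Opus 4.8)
The plan is to substantiate this observation by making the mechanism explicit for the model equation \eqref{eqn: Weighted eigenfunction equation}, which is also the engine behind Theorem~\ref{Fundamental gap of Horoconvex-domains}. Start from a Dirichlet eigenfunction $\tilde u$ of $\Delta_{\tilde g}$ with $\tilde g = e^{2\varphi}g$; the associated $u = e^{\frac{n-2}{2}\varphi}\tilde u$ solves \eqref{eqn: Weighted eigenfunction equation} on the background $(M,g)$, with $\rho = e^{2\varphi}$ and a potential $V$ manufactured from $\varphi$ that vanishes identically when $n=2$. The key structural point is that $\tilde g$ carries its own Levi-Civita connection $\tilde\nabla$, but there is no obligation to measure the concavity of $u$ --- or of $\log u$, or of the level-set geometry of $u$ --- with $\tilde\nabla$; one may instead use $\Hess$ relative to $\nabla = \nabla^g$. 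These two Hessians differ only by first-order terms built from $d\varphi$, so the obstruction to concavity on the $\tilde g$-side (a curvature term with an unfavorable sign, as for convex domains in $\mathbb{H}^n$) is traded, on the $g$-side, for the curvature of $g$ together with lower-order terms involving $\varphi$, $V$, and $\rho$; since $g$ is ours to choose, we take it to have constant \emph{positive} curvature.

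Concretely, I would run a concavity maximum principle of Korevaar and Andrews--Clutterbuck \cite{andrews2011proof} type: either for the symmetric two-point function associated with $\log u$ and a one-dimensional model comparison profile, or directly for the tensor $\Hess_\nabla \log u$, computing the terms that can break concavity at an interior extremum. These fall into three groups: (i) a commutator (curvature) term equal to the curvature of the chosen connection $\nabla$; (ii) $\Hess V$, weighted by powers of $u$; and (iii) contributions from $\Hess \rho$ and from $|\nabla \log u|^2$. Group (i) is precisely the quantity we have gained the freedom to prescribe, and constant positive curvature makes it cooperate --- exactly as positive ambient curvature assists, rather than obstructs, the round-sphere fundamental gap estimates. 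For the boundary, since $u = 0$ on $\partial\Omega$ the conclusion must be phrased as a modulus-of-concavity estimate, and the boundary term in the maximum principle is controlled by the second fundamental form of $\partial\Omega$ in the metric $g$; demanding the correct sign there is a convexity condition on $\Omega$ in the comparison geometry, which pulls back through the conformal identification to the horoconvexity hypothesis of Theorem~\ref{Fundamental gap of Horoconvex-domains}.

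The main obstacle is that $\varphi$ enters all of (i)--(iii) simultaneously: improving the curvature term forces a non-constant weight $\rho = e^{2\varphi}$ (and, when $n > 2$, a nonzero $V$) whose Hessians reappear as costs, and these cannot be adjusted independently of the geometry already fixed. The argument therefore closes only when the cost terms are dominated by the curvature gain, which caps how much $\varphi$ --- hence how large $\Omega$ --- may be; carrying this out quantitatively, verifying that the admissible range is nonempty (it is, yielding $D \lesssim 0.5164$ in the horoconvex case), and tracking the conformal distortion of the diameter is where the real work lies, and it is why Theorem~\ref{Fundamental gap of Horoconvex-domains} comes with a diameter restriction rather than covering every horoconvex domain.
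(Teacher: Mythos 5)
Your proposal follows essentially the same route as the paper: trade the Levi-Civita connection of $\tilde g$ for that of a conformally related constant-positive-curvature metric, run a maximum principle on $\Hess_{g}\log u$ whose curvature term now has the favorable sign, and pay for this with the $\Hess\rho$ and $\Hess V$ terms plus a convexity hypothesis on $\Omega$ in the comparison geometry --- which is precisely the content of Lemma \ref{mainpropposition}, Theorem \ref{Log concavity estimate: precise version}, and the proof of Theorem \ref{Fundamental gap of Horoconvex-domains}, including the resulting diameter restriction. The only implementation difference is that the paper closes the argument by a continuity family with a barrier criterion and obtains a genuine pointwise estimate $\Hess\log u_1\le 0$ with respect to the spherical connection (the boundary causes no trouble since $\log u_1$ is strongly concave near a convex boundary, and the gap then follows from the Neumann problem for $u_2/u_1$ via Lemma \ref{Andrews-Ni bound}), rather than the two-point modulus-of-concavity formulation you suggest as an alternative.
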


The idea of choosing a connection to prove analytic results has a long history in differential geometry (see, e.g., \cite{uhlenbeck1982connections,uhlenbeck1982removable}) and is often known as \emph{gauge theory}. 
From this perspective, choosing a conformal deformation and considering the associated connection corresponds to a conformal gauge. Traditionally, conformal gauge theory has been developed using the theory of principle bundles (see, for instance, \cite{ogiue1967theory}). 
In this paper, we will not use this approach and so our choices of gauge will appear as particular choices of coordinates and conformal factors. However, it is possible to express our results in coordinate-free language by reformulating them in gauge-theoretic terms.

\subsection{Overview of the Paper}

 The main results in this paper establish log-concavity estimates for eigenfunction of the problem \eqref{eqn: Weighted eigenfunction equation}.  To begin, let us first consider the case where the potential function $V$ vanishes.

\begin{theorem}\label{thm: log-concavity one-point}
    Suppose that $\Omega \subset \mathbb{M}^n_K$ (for $K \geq 0$) is a convex domain.
    If $\rho:\Omega \to \mathbb{R}$ is a positive function which satisfies
    \begin{equation}\label{eq: simple example}
         \nabla^2 \rho \leq 2 K \rho g_{\mathbb M^n_K},
    \end{equation}
then the principle eigenfunction of \eqref{eqn: Weighted eigenfunction equation} (with vanishing potential) is log-concave with respect to the Levi-Civita connection on $\mathbb{M}^n_K$.
\end{theorem}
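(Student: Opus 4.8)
The plan is to use the continuity/maximum-principle method of Korevaar and of Andrews--Clutterbuck, adapted to the model space $\mathbb M^n_K$. First I would reduce the statement to showing that the function $w = -\log u_1$ is convex on $\Omega$, where $u_1 > 0$ is the principle eigenfunction of $-\Delta_g u = \lambda \rho u$ with $u_1 = 0$ on $\partial\Omega$. Since $u_1 \to 0$ at the boundary, $w \to +\infty$ there, so convexity cannot fail at the boundary and it suffices to rule out an interior failure. The standard device is to study the two-point function
\begin{equation*}
    Z(x,y) = w\!\left(\tfrac{x+y}{2}\right) - \tfrac12 w(x) - \tfrac12 w(y)
\end{equation*}
(interpreted via geodesic midpoints in $\mathbb M^n_K$), or equivalently to run a continuity argument on the Hessian $\Hess w$ and show that the smallest eigenvalue of $\Hess w$ cannot reach $0$ at an interior point. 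I would set up a family $w_t$ interpolating from a known convex function to $w$ and show the set of $t$ for which $w_t$ is convex is open and closed.

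Next I would compute the elliptic equation satisfied by $w$. From $-\Delta u_1 = \lambda\rho u_1$ one gets $\Delta w = |\nabla w|^2 + \lambda\rho$. Differentiating twice and commuting derivatives (here the Bochner-type term brings in the curvature of $\mathbb M^n_K$, i.e. the sectional curvature $K$) yields a parabolic-type inequality for $\Hess w$ of the schematic form
\begin{equation*}
    \Delta (\Hess w) \;\succeq\; 2\,\nabla w \cdot \nabla(\Hess w) + 2\,(\Hess w)^2 + \lambda\,\Hess \rho + (\text{curvature terms involving } K).
\end{equation*}
At a putative interior minimum point $x_0$ of the least eigenvalue of $\Hess w$, with eigenvector $\xi$, one evaluates this in the direction $\xi$: the $(\Hess w)^2$ term is nonnegative, the first-order term vanishes, and the zeroth-order obstruction is precisely $\lambda\,\Hess\rho(\xi,\xi)$ together with the curvature contribution. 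This is where the hypothesis \eqref{eq: simple example}, $\nabla^2\rho \le 2K\rho\, g$, enters: I expect the curvature terms to contribute something like $+2K\lambda\rho$ (positive, since $K \ge 0$), and the condition on $\Hess\rho$ is exactly what is needed to prevent $\lambda\,\Hess\rho(\xi,\xi)$ from overwhelming that positive term, so that the full right-hand side is $\ge 0$ at $x_0$ — contradicting the minimum principle and forcing $\Hess w \ge 0$ throughout. One should double-check the borderline case $\Hess w(\xi,\xi) = 0$ and use the strong maximum principle / a small perturbation of $w$ to get strict positivity of the remaining eigenvalues or to close the continuity argument.

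The main obstacle, I expect, is not the formal Bochner computation but the analytic bookkeeping near $\partial\Omega$ and at $t$-endpoints of the continuity method: one must ensure $\Hess w$ is controlled uniformly up to the boundary (using convexity of $\Omega$ and the blow-up $w \to \infty$, so that the minimal eigenvalue of $\Hess w$ stays away from $0$ near $\partial\Omega$, e.g. via a barrier comparing $w$ to $-\log(\text{distance-type function})$), and that the interpolating family can be chosen to start at a manifestly log-concave comparison function on $\mathbb M^n_K$ while keeping the eigenvalue equation's structure. A cleaner alternative, which I would pursue in parallel, is the two-point maximum principle directly on $Z(x,y)$ above: this avoids differentiating the equation twice, but requires care with the geodesic midpoint map on $\mathbb M^n_K$ and the second variation of arc length, where again the sign $K \ge 0$ and the bound \eqref{eq: simple example} are the crucial inputs.
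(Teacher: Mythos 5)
Your proposal is essentially the paper's proof: the paper runs the same continuity method (its Lemma \ref{mainpropposition} with barrier $b\equiv 0$, fixed domain, and the family $\rho(t)=(1-t)+t\rho$, $V\equiv 0$), and the maximum-principle computation at the first interior touching point yields exactly the terms you predict --- a good curvature contribution $2K\lambda\rho + 2K v_1^2$ set against the obstruction $\lambda\,\Hess\rho(\xi,\xi)$, which hypothesis \eqref{eq: simple example} controls. The strictness/borderline issue you flag (your non-strict inequality does not by itself contradict the minimum condition) is resolved in the paper by the interpolation itself, whose extra term $2K\lambda(1-t)$ makes the barrier quantity strictly positive for $t<1$, while the boundary is handled by the blow-up of the log-Hessian near $\partial\Omega$ exactly as you anticipate.
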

In this theorem and throughout the rest of the paper, we denote $\mathbb M^n_K$ to be the simply connected space-form of dimension $n$ with constant sectional curvature $K.$ In other words, $\mathbb M^n_K$ is a sphere when $K>0$, is Euclidean space when $K=0$, and is hyperbolic space when $K<0$.

 The strategy used in the proof of Theorem \ref{thm: log-concavity one-point} is a continuity method
 (see e.g. \cite{singer1985estimate, Caffarelli-Friedman}). In other words, we consider a one-parameter family of PDEs so that the eigenfunction at the initial time is log-concave and the final time is the problem of interest. If the eigenfunction is not log-concave for $t = 1,$ then there must be an intermediate time $t_0$ for which the log-concavity fails. Applying then a maximum principle, we derive a contradiction. We should note that Lemma \ref{mainpropposition} generalizes the main lemma from our previous work in \cite{surfacepaper1}. This theorem implies the following fundamental gap estimate (see Lemma \ref{Andrews-Ni bound}).
\begin{cor}\label{Gap-estimate}
Under the assumption of Theorem \ref{thm: log-concavity one-point},
    the fundamental gap of the problem \eqref{eqn: Weighted eigenfunction equation} satisfies
    \[\Gamma(\Omega) \geq \frac{1}{\|\rho\|_\infty}\left(\frac{\pi^2}{ D^2}+\frac{K}{2}\right)\].
\end{cor}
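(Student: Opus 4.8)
The plan is to combine the log-concavity supplied by Theorem~\ref{thm: log-concavity one-point} with a ground-state (Doob) substitution and the Bakry--\'Emery eigenvalue comparison of Lemma~\ref{Andrews-Ni bound}. Let $u_1>0$ and $u_2$ be the first two eigenfunctions of \eqref{eqn: Weighted eigenfunction equation} with $V\equiv 0$, and put $v:=u_2/u_1$, which extends to a bounded Lipschitz function on $\overline\Omega$ by the Hopf lemma. Using $-\Delta u_i=\lambda_i\rho u_i$ one gets
\begin{equation*}
\operatorname{div}\bigl(u_1^2\,\nabla v\bigr)=-(\lambda_2-\lambda_1)\,\rho\,u_1^2\,v\quad\text{in }\Omega,\qquad \int_\Omega\rho\,u_1^2\,v\,dV=0,
\end{equation*}
where the constraint reflects $\langle u_1,u_2\rangle_{L^2(\rho\,dV)}=0$ and the boundary term $u_1^2\,\partial_\nu v$ vanishes on $\partial\Omega$ because $u_1=0$ there. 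Hence $\lambda_2-\lambda_1$ is the first nonzero eigenvalue of the (degenerate) Neumann problem $-\operatorname{div}(u_1^2\nabla\cdot)=\sigma\,\rho\,u_1^2\,\cdot$ on the weighted manifold $(\Omega,g_{\mathbb M^n_K},u_1^2\,dV)$, whose spectrum is exactly $\{\lambda_k-\lambda_1\}_{k\ge1}$ via $u\leftrightarrow u/u_1$.

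Next I would strip off the weight. By Courant--Fischer,
\begin{equation*}
\lambda_2-\lambda_1=\min_{\dim E=2}\ \max_{0\ne v\in E}\ \frac{\int_\Omega u_1^2\,|\nabla v|^2\,dV}{\int_\Omega\rho\,u_1^2\,v^2\,dV},
\end{equation*}
the minimum being over two-dimensional subspaces $E$ of $H^1(\Omega,u_1^2\,dV)$. For each such $E$, evaluating the weighted quotient at the vector that maximizes the \emph{unweighted} quotient over $E$ and using $\rho\le\|\rho\|_\infty$ shows the weighted max over $E$ is at least $\|\rho\|_\infty^{-1}$ times the unweighted max over $E$; taking $\min_E$ then gives
\begin{equation*}
\lambda_2-\lambda_1\ \ge\ \frac{\mu_1}{\|\rho\|_\infty},\qquad \mu_1:=\min_{\dim E=2}\ \max_{0\ne v\in E}\ \frac{\int_\Omega u_1^2\,|\nabla v|^2\,dV}{\int_\Omega u_1^2\,v^2\,dV},
\end{equation*}
so $\mu_1$ is the first nonzero Neumann eigenvalue of the drift Laplacian $\Delta_f=\Delta-\nabla f\cdot\nabla$, $f=-2\log u_1$, on $(\Omega,g_{\mathbb M^n_K})$.

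It then remains to bound $\mu_1$ below through Lemma~\ref{Andrews-Ni bound}. Theorem~\ref{thm: log-concavity one-point} gives $\Hess\log u_1\le 0$, so the Bakry--\'Emery Ricci tensor satisfies
\begin{equation*}
\Ric_f=\Ric_{\mathbb M^n_K}+\Hess f=(n-1)K\,g_{\mathbb M^n_K}-2\,\Hess\log u_1\ \ge\ K\,g_{\mathbb M^n_K}
\end{equation*}
since $K\ge0$ and $n\ge2$. As $\Omega$ is convex (hence $\partial\Omega$ has nonnegative second fundamental form) with diameter $D$, the one-dimensional model comparison in Lemma~\ref{Andrews-Ni bound} applies and yields $\mu_1\ge\pi^2/D^2+K/2$. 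Combining the two displays gives $\Gamma(\Omega)\ge\|\rho\|_\infty^{-1}\bigl(\pi^2/D^2+K/2\bigr)$.

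The substantive input is Theorem~\ref{thm: log-concavity one-point}; once it is in hand, the only delicate point is that $f=-2\log u_1$ is singular at $\partial\Omega$ (the drift $\nabla f$ blows up, although the measure $u_1^2\,dV$ is finite and degenerates only to second order), so one cannot quote a textbook Bakry--\'Emery eigenvalue estimate directly and must instead work on the degenerate weighted manifold $(\Omega,u_1^2\,dV)$ with the natural boundary condition. Checking that this degenerate problem has discrete spectrum equal to $\{\lambda_k-\lambda_1\}$ and that the Neumann comparison survives the boundary degeneracy is exactly what Lemma~\ref{Andrews-Ni bound} records, and is the analytic core of the argument; everything else above is Rayleigh-quotient bookkeeping.
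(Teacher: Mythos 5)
Your argument is correct and follows essentially the same route as the paper: the ratio $u_2/u_1$ reduction of Lemma~\ref{Ratio of eigenfunctions lemma}, the weight-stripping by $\|\rho\|_\infty$ in the min--max quotient (which is exactly the content of the paper's proof of Lemma~\ref{Andrews-Ni bound}), and the Andrews--Ni comparison with $\Ric_f \ge K g$ supplied by the log-concavity of Theorem~\ref{thm: log-concavity one-point}. Your explicit Courant--Fischer bookkeeping and the remarks on the boundary degeneracy of $f=-2\log u_1$ just spell out steps the paper delegates to Lemma~\ref{Andrews-Ni bound} and the cited literature.
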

Note that Theorem \ref{thm: log-concavity one-point} only holds if $K \geq 0$. However, by applying this result for a particular choice of conformal deformation, we prove Theorem \ref{Fundamental gap of Horoconvex-domains}.
 This result is the first known lower bound on the fundamental gap in a space of negative curvature in terms of the diameter.\footnote{There are estimates on the fundamental gap in general settings \cite{ramos2023integral}, but these results require that the domain satisfy an interior rolling ball condition and use the radius of this ball in their gap estimate.} In fact, it is possible to relax the assumption of horoconvexity somewhat (see Subsection \ref{Discussion-on-horoconvexity} for details).

It is possible to generalize Theorem \ref{thm: log-concavity one-point} to Schr\"odinger operators with non-vanishing potential (see Theorem \ref{Log concavity estimate: precise version} for the precise version). From this, one can establish fundamental gap estimates for $C^4$ conformal deformations of a round sphere for domains whose second fundamental form $\tilde h_{\partial \Omega}$ is larger than some constant.

\begin{cor}
     \label{cor: conformal nearly-round spheres}
    Let $(\mathbb{M}^n_K, \tilde g = e^{2\varphi} g_{\mathbb M^n_K})$ be a conformal deformation of a round sphere. There exists $\varepsilon = \varepsilon(n,K)>0$ 
    such that whenever $\|\varphi \|_{C^4}\leq \varepsilon(K,n),$ 
    we have for any $\Omega$ convex with respect to $g_{\mathbb M^n_K},$ the function $ u = e^{\tfrac{n-2}{2}\varphi}\tilde  u$ is log-concave with respect to $g_{\mathbb M^n_K}$ Levi-Civita connection, where $\tilde u$ is the principle eigenfunction of $\Omega$ with respect to $\Delta_{\tilde g}$. Furthermore, the fundamental gap of the domain satisfies
    \begin{equation}
        \Gamma(\Omega, \Delta_{\tilde g} ) \geq \frac{\min \exp(2\varphi)}{\max\exp(2 \varphi)}\frac{\pi^2}{D^2 }+ \frac{1}{\max\exp(2\varphi)}\frac{K}{2}.
    \end{equation}
\end{cor}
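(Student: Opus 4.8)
The plan is to deduce Corollary~\ref{cor: conformal nearly-round spheres} from the general potential version (Theorem~\ref{Log concavity estimate: precise version}, which I will treat as a black box) by a perturbation argument around $\varphi \equiv 0$. First I would recall the conformal change formula already set up in the introduction: if $\tilde g = e^{2\varphi}g_{\mathbb M^n_K}$ and $\tilde u$ is the principle $\tilde g$-Dirichlet eigenfunction on $\Omega$, then $u = e^{\frac{n-2}{2}\varphi}\tilde u$ satisfies the weighted Schr\"odinger problem \eqref{eqn: Weighted eigenfunction equation} with potential $V$ and weight $\rho$ that are explicit (local, pointwise) expressions in $\varphi$ and its derivatives up to order two --- schematically $\rho = e^{2\varphi}$ and $V = c_n(|\nabla\varphi|^2 \cdot(\text{stuff}) + \Delta_{g}\varphi)$ with the precise coefficients coming from Section~\ref{Background Section}. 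The point is that $V$ and the relevant derivatives of $\rho$ are bounded by a constant (depending only on $n,K$) times $\|\varphi\|_{C^2}$, hence by $\varepsilon$, and that at $\varphi = 0$ we have $V \equiv 0$, $\rho \equiv 1$, so the hypotheses of Theorem~\ref{thm: log-concavity one-point} are satisfied strictly (since $K>0$, the inequality $\nabla^2\rho \leq 2K\rho\, g$ holds with room to spare).

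The core step is then to check that the quantitative hypotheses of the precise log-concavity theorem (Theorem~\ref{Log concavity estimate: precise version}) --- which presumably ask for a differential inequality on $V$ and $\rho$ of the form $\nabla^2\rho \leq 2K\rho\, g + (\text{error})$ together with a sign/size condition on $V$ and $\nabla^2 V$ --- are stable under $C^4$-small perturbations. I would argue: the conditions at $\varphi=0$ hold with a strict gap $\delta = \delta(n,K)>0$; all the quantities entering those conditions depend continuously (indeed, are estimated linearly) on $\varphi$ through norms no stronger than $\|\varphi\|_{C^4}$ (the $C^4$ rather than $C^2$ is needed because the hypothesis involves $\nabla^2 V$ and $V$ itself involves second derivatives of $\varphi$); hence there is $\varepsilon(n,K)>0$ so that $\|\varphi\|_{C^4}\leq\varepsilon$ keeps all conditions valid. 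Applying Theorem~\ref{Log concavity estimate: precise version} gives that $u = e^{\frac{n-2}{2}\varphi}\tilde u$ is log-concave with respect to the $g_{\mathbb M^n_K}$ Levi-Civita connection --- note this is exactly the transferred convexity statement, since log-concavity of $u$ in the background connection is \emph{not} the same as log-concavity of $\tilde u$ in the $\tilde g$ connection.

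For the gap bound I would invoke Corollary~\ref{Gap-estimate} (equivalently the Andrews--Ni type bound, Lemma~\ref{Andrews-Ni bound}) applied to the weighted problem: it yields $\Gamma \geq \frac{1}{\|\rho\|_\infty}\big(\frac{\pi^2}{D^2} + \frac{K'}{2}\big)$ where here $\|\rho\|_\infty = \max e^{2\varphi}$ and the ``effective curvature'' contributed by the first term must be tracked as $\min e^{2\varphi}$ (this is where the ratio $\min e^{2\varphi}/\max e^{2\varphi}$ appears: the $\frac{\pi^2}{D^2}$ gain in one-dimensional model comparison is governed by the smallest value of the weight, while the overall normalization is the largest), giving
\[
\Gamma(\Omega,\Delta_{\tilde g}) \;\geq\; \frac{\min \exp(2\varphi)}{\max\exp(2\varphi)}\frac{\pi^2}{D^2} + \frac{1}{\max\exp(2\varphi)}\frac{K}{2},
\]
as claimed; one also uses that the gap is conformally covariant in the sense that $\lambda_i$ for $\Delta_{\tilde g}$ equals $\lambda_i$ for the weighted problem \eqref{eqn: Weighted eigenfunction equation}, so $\Gamma(\Omega,\Delta_{\tilde g}) = \Gamma$ of the weighted problem.

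The main obstacle I anticipate is not the soft perturbation logic but the bookkeeping of constants: one must verify that every term appearing in the hypotheses of the precise theorem is genuinely controlled by $\|\varphi\|_{C^4}$ with a constant depending only on $(n,K)$ and not, say, on $\Omega$ or $D$ --- in particular the curvature of the perturbed connection and the ``drift'' terms created by conjugating by $e^{\frac{n-2}{2}\varphi}$ must be uniformly small over all convex $\Omega$. A secondary subtlety is making sure the diameter $D$ in the final estimate is the $g_{\mathbb M^n_K}$-diameter (the one for which convexity and the comparison model are set up) rather than the $\tilde g$-diameter; since these differ by a factor controlled by $\varepsilon$, this only perturbs constants, but it should be stated cleanly. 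If the precise theorem is stated with enough uniformity, the corollary then follows essentially by continuity.
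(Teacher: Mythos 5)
Your outline follows the paper's own route: pass to the weighted Schr\"odinger form with $V = \tfrac{(n-2)^2}{4}|\nabla\varphi|^2 + \tfrac{n-2}{2}\Delta\varphi$ and $\rho = e^{2\varphi}$, verify the hypothesis of Theorem \ref{Log concavity estimate: precise version} for $\|\varphi\|_{C^4}$ small, and then feed the resulting log-concavity into Lemma \ref{Andrews-Ni bound} (Corollary \ref{Gap-estimate}). However, there is one concrete step you assert rather than prove, and it is exactly where the paper does its work. The condition \eqref{condition-Theorem-log-concavity} reads $\nabla^2(V-\lambda(t)\rho) > 2K(V-\lambda(t)\rho)$, and at $\varphi\equiv 0$ its ``strict margin'' is $2K\lambda(t)$, which depends on $\Omega$ through the eigenvalue $\lambda(t)$ of the deformed problem. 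To obtain $\varepsilon=\varepsilon(n,K)$ independent of $\Omega$, you need a positive lower bound for $\lambda(t)$ that is uniform over all convex domains in $\mathbb M^n_K$ and all $t\in(0,1)$; your claim that ``the conditions at $\varphi=0$ hold with a strict gap $\delta(n,K)$'' is precisely this unproved statement. The paper supplies it by a Rayleigh-quotient comparison, $\lambda(\rho(t)) \geq \tfrac{1}{\|\rho(t)\|_\infty}\bigl(\lambda_1(\Omega)+\min_\Omega V(t)\bigr)$, combined with Ling's estimate $\lambda_1(\Omega)\geq \tfrac12(n-1)K+\pi^2/R^2$ \cite{ling2006lower}. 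By contrast, the issue you flag as the main obstacle --- that $V$, $\rho$ and their derivatives be controlled by $\|\varphi\|_{C^4}$ uniformly in $\Omega$ --- is immediate, since these are pointwise expressions in $\varphi$; note also that the terms multiplied by the (possibly large) $\lambda(t)$ combine as $\lambda(t)\bigl(2K\rho(t)g-\nabla^2\rho(t)\bigr)$, which is favorably signed for small $\varepsilon$, so only the lower bound on $\lambda(t)$ is at stake.

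A smaller point: your explanation of the factor $\min e^{2\varphi}/\max e^{2\varphi}$ is not how it arises. Lemma \ref{Andrews-Ni bound} is applied with the weight and yields only $\tfrac{1}{\|\rho\|_\infty}\bigl(\tfrac{\pi^2}{D_{\mathbb M^n_K}^2}+\tfrac{K}{2}\bigr)$ with $\|\rho\|_\infty=\max e^{2\varphi}$; the $\min e^{2\varphi}$ enters purely through converting diameters, $D_{\mathbb M^n_K}\leq D_{\tilde g}/\min e^{\varphi}$, since the stated bound is in terms of the $\tilde g$-diameter. So the diameter conversion is not a perturbative detail that ``only perturbs constants'' --- it is the source of the explicit constant in the corollary and should be carried out exactly as above.
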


\begin{remark}
In the previous corollary, we assumed that the domain is convex with respect to spherical geometry. If one wants to express this condition in terms of the conformal metric $\tilde g$, it suffices to assume to that the second fundamental for on $\partial \Omega$ (with respect to $\tilde g)$ satisfies  $\tilde h_{\partial \Omega}\geq \|\tilde\nabla \varphi\|_{\infty, \tilde g}.$ See Remark \ref{remarks-conformal-theorem}.
\end{remark}

Although there is a large body of work studying the fundamental gap for domains in spherical or flat geometry, there are comparatively few results 
known when the sectional curvature is positive but non-constant. In particular, Corollary \ref{cor: conformal nearly-round spheres} is the first lower bound for domains in positively curved geometries of dimensions three or higher.

The idea of considering a conformal deformation to find a more tractable connection can be applied to other elliptic equations as well. As a simple demonstration of this approach, we consider the torsion problem \begin{equation} \label{Torsion problem}
        \begin{cases}
            \Delta u + 1 =0, \quad x \in \Omega \\
            u(x) = 0, \quad x \in \partial \Omega
        \end{cases}
        \end{equation}
        on a convex domain $\Omega \subset \mathbb{S}^2$. Applying stereographic projection to obtain a flat connection, we can apply a result of Kennington to obtain the following \cite{kennington1985power}.
 \begin{theorem} \label{Corollary to the torsion problem}
    Let $\Omega \subset \mathbb{S}^2$ be a convex domain whose circumradius is at most $2 \arctan(1/5)$ and consider the solution $u$ to the torsion problem.
    Then it is possible to find a point $p \in \Omega$ so that $(\pi_p u)^\frac{1}{3}$ is concave where $\pi_p$ is stereographic projection based at $p$. As a result, the level sets of $u$ must be connected and we can derive lower bounds on their geodesic curvature.
\end{theorem}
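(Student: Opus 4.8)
The plan is to reduce the problem to a Euclidean boundary value problem via stereographic projection and then invoke Kennington's power-concavity theorem \cite{kennington1985power}. First I would choose $p \in \Omega$ appropriately (near the circumcenter) and let $\pi_p : \mathbb{S}^2 \setminus \{-p\} \to \mathbb{R}^2$ be the stereographic projection with $p \mapsto 0$. Under $\pi_p$ the round metric corresponds to the conformal metric $e^{2\varphi}\delta$ on $\mathbb{R}^2$ with $e^{2\varphi} = \tfrac{4}{(1+|y|^2)^2}$, and since $\Delta_{e^{2\varphi}\delta} = e^{-2\varphi}\Delta_\delta$ in dimension two, the push-forward $v := \pi_p u$, which is positive on $\widetilde\Omega := \pi_p(\Omega)$ by the maximum principle, solves
\begin{equation*}
    -\Delta_\delta v = \frac{4}{(1+|y|^2)^2} \quad\text{in }\widetilde\Omega, \qquad v = 0 \ \text{ on } \partial\widetilde\Omega .
\end{equation*}
Because $\pi_p$ maps the geodesic ball $B_r(p)$ onto the Euclidean ball $B_{\tan(r/2)}(0)$, the circumradius bound $2\arctan(1/5)$ forces $\widetilde\Omega \subseteq B_{1/5}(0)$.

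The next step is to verify the two hypotheses of Kennington's theorem for $\widetilde\Omega$. The first is that $\widetilde\Omega$ is convex in $\mathbb{R}^2$: since its circumradius is below $\pi/2$, $\Omega$ lies in an open hemisphere and is therefore an intersection of closed hemispheres, each of which contains $p$ and is hence carried by $\pi_p$ to a Euclidean disk (or half-plane) containing the origin; an intersection of such sets is convex, and $\pi_p$ commutes with intersections. Note that no smallness of $\Omega$ is needed for this part. The second hypothesis is a power-concavity condition on the right-hand side $f := 4(1+|y|^2)^{-2}$, and this is precisely where the circumradius bound enters: a direct computation of the Hessian of the relevant power of $f$ shows it is concave exactly on the Euclidean ball of radius $1/5$ — the threshold being the radius where a Hessian eigenvalue, a negative multiple of $1 - 25|y|^2$, vanishes, which is also the origin of the value $\arctan(1/5)$ in the statement. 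With both hypotheses secured, Kennington's theorem applies on $\widetilde\Omega$ and yields that $(\pi_p u)^{1/3} = v^{1/3}$ is concave, the exponent $1/3$ being dictated by Kennington's correspondence between the power-concavity available for $f$ and that obtained for the solution $v$.

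The stated consequences then follow from the concavity of $v^{1/3}$. Each superlevel set $\{\pi_p u \ge c\}$ is convex in $\mathbb{R}^2$ and hence has connected boundary; pulling back by the homeomorphism $\pi_p$ shows the level sets of $u$ are connected. For the geodesic curvature estimate I would combine the non-negativity of the Euclidean geodesic curvature of a convex level set (possibly refined using the power-concavity together with $\Delta_\delta v = -f$) with the conformal transformation law $\kappa_{\mathbb{S}^2} = e^{-\varphi}\big(\kappa_\delta + \partial_\nu\varphi\big)$ for geodesic curvature; since $\varphi = \log\tfrac{2}{1+|y|^2}$ satisfies $|\nabla\varphi|\le \tfrac{5}{13}$ and $e^{-\varphi}\le \tfrac{13}{25}$ on $B_{1/5}(0)$, this produces an explicit lower bound on the geodesic curvature of the level sets of $u$ in $\mathbb{S}^2$.

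I expect the main obstacle to be this second step: pinning down the exact form of Kennington's hypothesis on the right-hand side, performing the Hessian computation so that the concavity threshold falls exactly on the Euclidean ball of radius $1/5$ — equivalently, circumradius $2\arctan(1/5)$ — and following Kennington's exponent bookkeeping to arrive at $1/3$. By comparison, the convexity of $\widetilde\Omega$ and the transfer of the conclusion back to $\mathbb{S}^2$ via the conformal factor should be routine.
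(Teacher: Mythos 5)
Your proposal is correct and follows essentially the same route as the paper: stereographic projection from the antipode of (a point near) the circumcenter, convexity of the projected domain $\widetilde\Omega$, the conformally transformed torsion equation $\Delta_\delta v + 4(1+|y|^2)^{-2}=0$, and Kennington's theorem (with concavity of the right-hand side, i.e.\ the case $\beta=1$) yielding that $v^{1/3}$ is concave, then transferring connectedness and curvature information back via the conformal factor. One small correction to your anticipated computation: the radial Hessian eigenvalue of $f=4(1+|y|^2)^{-2}$ is $16(5|y|^2-1)(1+|y|^2)^{-4}$, so $f$ is concave on the ball of radius $1/\sqrt{5}$ rather than exactly $1/5$; since $\widetilde\Omega\subseteq B_{1/5}(0)\subset B_{1/\sqrt{5}}(0)$ this only strengthens your argument, and the constant $2\arctan(1/5)$ in the statement comes from the paper's general-$\beta$ bound $2\arctan\bigl(1/(1+4\beta)\bigr)$ rather than from sharpness of that Hessian threshold.
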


\begin{remark}
    This result can be generalized for $C^2$ small deformations of round spheres in a straightforward way.
\end{remark}

\subsection{Structure of the paper}

In Section \ref{Background Section}, we recall some well-known facts about conformal geometry and fundamental gap estimates. We then discuss how to obtain fundamental gap estimates for the problem \eqref{eqn: Weighted eigenfunction equation}, which is done by a comparison argument to the case where $\rho$ is constant. In addition, we show how changing the connection allows us to prove Theorem \ref{Corollary to the torsion problem} 
 In Section \ref{Barrier PDE section}, we prove a generalization of the the barrier method from \cite{surfacepaper1}. Using this, we prove Theorem \ref{thm: log-concavity one-point} in Section \ref{Section: Application}. In this section, we also prove
Theorem \ref{Fundamental gap of Horoconvex-domains}.  

\subsection*{Acknowledgements}

Malik T. wishes to thank Rugang Ye for fruitful discussions. He wishes to thank especially Guofang Wei for several helpful comments that improved this manuscript.
Gabe K. would like to thank Mizan Khan for his helpful suggestions about exposition. 
The authors would also like to thank Xuan Hien Nguyen for her helpful comments.

\section{Background and Preliminaries}\label{Background Section}

\subsection{Conformal deformations of the Laplace operator}

In this section, we recall some well-known facts about conformal geometry.
Given a function $\varphi: M\rightarrow \mathbb R,$ we consider the conformal metric  $\tilde g = e^{2\varphi}g.$
There is a known formula for conformal deformations of Hessian and the Laplace operator, which states that for a smooth function $F:M \rightarrow \mathbb R$
\begin{align}
\Hess_{\tilde g} F &= \Hess_g F - 2 d\varphi \otimes d F +(\nabla \varphi \cdot \nabla F) g \label{eqn: conformal change of Hessian} \\
\label{eqn: conformal change laplacian formula}
 \Delta _{\tilde g}F &= e^{-2\varphi}\Bigl(\Delta_g F +(n-2)\nabla \varphi\cdot \nabla F\Bigr).
\end{align}
Then, we consider a (possibly weighted) $\tilde g$-eigenfunction $\psi$ satisfying
\begin{align}\label{conformal eigenfunction equation}
    \Delta_{\tilde g} \psi = -\lambda {\tilde \rho} \psi  \quad \textup{in }\Omega \quad \textup{and }\quad  \psi = 0 \quad \textup{on }  \partial \Omega.
\end{align}
Observe that for any $a\in \mathbb R$ that 
\begin{align*}
  &\Delta_g \psi +(n-2)\nabla \varphi\cdot \nabla \psi \\
  =&e^{-a\varphi }\Bigl(\Delta_g (\psi e^{a \varphi}) +\Bigl[ - a^2  |\nabla \varphi |^2-a  \Delta_g \varphi \Bigr]\psi e^{a \varphi}\Bigr) -(n-2-2a)\nabla \varphi \cdot \nabla \psi.
\end{align*}
Choosing $a=\frac{n-2}{2}$, we see that the function $u = \psi e^{\tfrac{n-2}{2}\varphi}$ is a weighted eigenfunction of a Schr\"odinger operator of $g$.
\begin{equation}\label{eqn: Conformal eigenfunction equation, Schrodinger form}
      \begin{cases}
       -\Delta_g u +\Bigl[ \tfrac{(n-2)^2}{4}  |\nabla \varphi |^2+\tfrac{n-2}{2}  \Delta_g \varphi \Bigr]u = \lambda   e^{2\varphi} {\tilde \rho} u\\
      u \vert_{\partial \Omega} = 0
   \end{cases},
\end{equation}
which is a composite membrane equation with the weighting function $\displaystyle \rho = {\tilde \rho} e^{2\varphi}$ and potential $V =  \tfrac{(n-2)^2}{4}  |\nabla \varphi |^2+\tfrac{n-2}{2}  \Delta_g \varphi$. Note that for surfaces ($n=2$), when there is no potential in the eigenvalue equation, after a conformal change there is again no potential and the factor $e^{\tfrac{n-2}{2} \varphi}$ is trivial. As a result, changing the metric conformally for surfaces essentially just changes the weighting function. However, in higher-dimensions, a conformal transformation will transform the Laplace operator into a Schr\"odinger operator with a non-vanishing potential term. 

Since convexity of domains will play a central role in this paper, we conclude this subsection by recalling how the second fundamental form of a hypersurface changes under conformal deformation. Given a domain $\Omega$, let $h$ and $\tilde h$ denote the second fundamental form of $\partial \Omega$ under $g$ and $\tilde g,$ respectively. Then for $p \in \partial \Omega$ and tangent vectors $X,Y\in T_p\partial\Omega$ one has 
\begin{equation} \label{Second fundamental form under conformal change}
    \tilde h(X,Y)  = e^{\varphi}\left(h(X,Y)+g(X,Y)\frac{\partial \varphi}{\partial N}\right), 
\end{equation}
where $N$ is the normal vector (unit length with respect to $g$) at $p.$ 
 Thus the smallest principle curvature $\kappa_{\min}$ satisfies
 \begin{equation}\label{principle-curvatures-higher-dimensions.}
     \tilde \kappa_{\textup{min}}= \inf_{\tilde g(X,X) = 1 } \tilde h(X,X)\nonumber = e^{-\varphi}\left(\kappa_{\min}+\frac{\partial \varphi}{\partial N}\right).
 \end{equation}

\subsection{The fundamental gap in hyperbolic space}

\label{Fundamental Gap in hyperbolic space subsection}

As mentioned in the introduction, the fundamental gap  of convex domains in hyperbolic space behaves very differently than in Euclidean or spherical geometries. As for the eigenfunctions of such regions, Shih \cite{shih1989counterexample} constructed convex domains in hyperbolic space whose principle eigenfunction has two distinct maximum points. As a result, such a function cannot be log-concave with respect to any connection since some of its level sets are \emph{disconnected}. Using a similar construction, Bourni et al. constructed convex domains in hyperbolic space (with arbitrary diameter) whose fundamental gap is arbitrarily small \cite{bourni2021explicit,bourni2022vanishing}. In recent work \cite{khan2022negative}, the first named author and Nguyen extended this argument to manifolds with any negative sectional curvature. In particular, for any manifold with even a single tangent plane of negative curvature, there are (small) convex domains whose fundamental gap is arbitrarily small. 

Despite these results, there are a number of open questions remaining about the fundamental gap of domains in negative curvature. In particular, if we strengthen the notion of convexity, we can ask whether it is possible to establish lower bounds on the fundamental gap. For instance, we can consider domains which are \emph{horoconvex}.

\begin{definition}\label{Def: horoconvexity}
A horocycle is a continuous curve in hyperbolic space 
 whose normal geodesics all converge asymptotically in the same direction. Such curves have geodesic curvature identically $1$. We say that a domain $\Omega \subset \mathbb H^2$ is horoconvex if at every point $p \in \partial \Omega$ there exists a horocycle passing through $p$ such that $\Omega$ is contained in the region bounded by the horocycle.
\end{definition}
For the Poincar\'e disk model, horocycles are Euclidean circles entirely contained in the unit disk $B$ and tangent to $\partial B.$ Concerning the fundamental gap of horoconvex domains, Nguyen, Stancu and Wei recently obtained the following result.

\begin{theorem}[Theorem 1.1 \cite{nguyen2022fundamental}] For every $n \geq 2$, there exists a constant $C(n)$ such that the Dirichlet fundamental gap of every horoconvex domain $\Omega$ with diameter $D \geq 4 \ln 2$ satisfies
$$
\Gamma(\Omega) \leq \frac{C(n)}{D^3}.
$$
\end{theorem}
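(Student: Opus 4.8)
The plan is to use the elementary variational upper bound for the gap together with the strong geometric rigidity of horoconvex domains. First I would record: if $u_1>0$ is the first Dirichlet eigenfunction, then for every Lipschitz $f$ with $\int_\Omega f\,u_1^2\,dV=0$,
\[
\Gamma(\Omega)=\lambda_2-\lambda_1\ \le\ \frac{\int_\Omega |\nabla f|^2\,u_1^2\,dV}{\int_\Omega f^2\,u_1^2\,dV},
\]
obtained by using $f u_1$ as a trial function for $\lambda_2$ and integrating by parts (this is the counterpart of the Andrews--Ni lower bound used above). So the task reduces to: given an arbitrary horoconvex $\Omega\subset\mathbb{H}^n$ of diameter $D\ge 4\ln 2$, produce a test function $f$ making the right-hand side at most $C(n)/D^3$.

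The decisive structural fact is that horoconvexity forces $\Omega$ to be \emph{round}. Since every supporting horosphere has all principal curvatures equal to $1$, $\partial\Omega$ has all principal curvatures $\ge 1$ and $\Omega$ is an intersection of horoballs; a convex domain that were thin in some direction would have a nearly geodesic (curvature near $0$) boundary piece there, which is impossible. Quantitatively one shows there is a point $m\in\Omega$ with $B(m,c_1 D)\subset\Omega\subset B(m,c_2 D)$ for universal $c_1,c_2>0$ (this is where $D\ge 4\ln 2$ is used, to keep the constants clean). By domain monotonicity of $\lambda_1$ and the classical expansion $\lambda_1\bigl(B(m,cD)\bigr)=\tfrac{(n-1)^2}{4}+\Theta(D^{-2})$, one gets $\lambda_1(\Omega)-\tfrac{(n-1)^2}{4}=:\mu^2$ with $\mu\asymp D^{-1}$; in particular $\lambda_1$ sits just above the bottom of the $L^2$-spectrum of $\mathbb{H}^n$, so $u_1$ cannot be concentrated near $m$.

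Working in geodesic polar coordinates $(r,\omega)$ centred at $m$, I would then establish sharp two-sided control of $u_1$ by Harnack estimates and barriers built from the explicit radial ground states of $B(m,c_1 D)$ and $B(m,c_2 D)$: $u_1$ behaves like the ball's radial eigenfunction, so that $\|u_1\|_\infty\asymp D^{-3/2}$, the measure $u_1^2\,dV$ is genuinely spread over $r\in[0,\Theta(D)]$, and is angularly nearly uniform; quantitatively,
\[
\int_\Omega e^{-2\,d(\cdot,m)}\,u_1^2\,dV\ \asymp\ D^{-3}\int_\Omega u_1^2\,dV
\qquad\text{and}\qquad
\int_\Omega\bigl(1-e^{-d(\cdot,m)}\bigr)^2 Y(\omega)^2\, u_1^2\,dV\ \gtrsim\ \int_\Omega u_1^2\,dV
\]
for every unit first spherical harmonic $Y$ on $S^{n-1}$. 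The essential point is the third power of $D^{-1}$ in the first estimate: it comes from the factor $\mu^2\asymp D^{-2}$ in $\int e^{-2r}\sin^2(\mu r)\,dr$ together with the normalisation $\|u_1\|_\infty^2\asymp D^{-3}$, and it is precisely the phenomenon that makes the true gap of a large geodesic ball comparable to $D^{-3}$ rather than $D^{-2}$. Now take $f=\bigl(1-e^{-d(\cdot,m)}\bigr)\,Y(\omega)$ — a product mimicking the shape $f_1(r)\,(\text{first harmonic})$ of the second eigenfunction of a ball — where $Y$ is chosen so that $\int_\Omega f\,u_1^2\,dV=0$; such a $Y$ exists because $n\ge 2$ (the $n$-dimensional space of first harmonics has a nonzero element orthogonal to any prescribed vector of $\mathbb{R}^n$). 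Since $|\nabla f|^2=e^{-2r}Y^2+\tfrac{(1-e^{-r})^2}{\sinh^2 r}\,|\nabla_{S^{n-1}}Y|^2$ and a direct computation gives $(1-e^{-r})^2/\sinh^2 r\le 4e^{-2r}$, the numerator is $\le C(n)\int_\Omega e^{-2d(\cdot,m)}u_1^2\,dV\le C(n)D^{-3}\int_\Omega u_1^2\,dV$, while the second displayed estimate and the orthogonality give a denominator $\ge c\int_\Omega u_1^2\,dV$; dividing yields $\Gamma(\Omega)\le C(n)/D^3$.

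The main obstacle is the analytic step of the previous paragraph: obtaining the sharp $D^{-3/2}$ height of $u_1$ and the $D^{-3}$ weighted estimate \emph{uniformly over all horoconvex domains of diameter $D$}, rather than merely for balls. This is exactly where horoconvexity — as opposed to mere convexity — is indispensable: it is what forces the roundness of $\Omega$, hence the ball-like behaviour of $u_1$, and without it the statement is false, since a general convex domain of diameter $D$ can be highly non-round and have gap far exceeding $D^{-3}$.
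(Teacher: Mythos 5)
First, note that the paper you are writing into does not prove this statement at all: it is quoted verbatim from \cite{nguyen2022fundamental}, so your proposal has to be judged against what a complete argument needs rather than against a proof in this paper. Your skeleton is reasonable: the inequality $\Gamma(\Omega)\le \int_\Omega|\nabla f|^2u_1^2\,dV\big/\int_\Omega f^2u_1^2\,dV$ for $\int_\Omega f\,u_1^2\,dV=0$ is correct, the trial function $f=(1-e^{-d(\cdot,m)})Y(\omega)$ has the right shape, and your bookkeeping is accurate for a geodesic ball: with $\int u_1^2=1$ one indeed has $\|u_1\|_\infty\asymp D^{-3/2}$ and $\int e^{-2r}u_1^2\,dV\asymp D^{-3}$, which reproduces the known $D^{-3}$ decay of the gap of large hyperbolic balls.

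The genuine gap is precisely the step you label ``the main obstacle'' and then treat as if it were a routine barrier argument: the uniform, sharp, ball-like control of $u_1$ for an \emph{arbitrary} horoconvex domain, above all the estimate $\int_\Omega e^{-2d(\cdot,m)}u_1^2\,dV\lesssim D^{-3}\int_\Omega u_1^2\,dV$, i.e.\ that $u_1$ at bounded distance from the incenter is a factor $\asymp 1/D$ below its bulk amplitude. Harnack inequalities only compare values of $u_1$ at bounded distance, and sub/supersolution comparisons with the ground states of $B(m,c_1D)$ and $B(m,c_2D)$ do not start, since the eigenvalues differ and there is no boundary data to anchor the comparison; even for the ball itself, the crucial smallness of the eigenfunction near the centre comes from the eigenvalue quantization (matching the regular inner solution with the outer oscillatory one), i.e.\ from global spectral information, not from a maximum principle. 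Without that estimate the whole mechanism producing the exponent $3$ collapses and the method yields only a weaker power of $D$; the companion lower bound on the denominator (near-angular uniformity of $u_1^2\,dV$ after orthogonalizing $Y$) is likewise unproved. Your geometric input is also weaker than what is available and what is actually needed: the multiplicative sandwich $B(m,c_1D)\subset\Omega\subset B(m,c_2D)$ is only sketched, whereas h-convexity gives the much stronger additive rigidity (inradius and circumradius differ by a bounded constant, in the spirit of Borisenko--Vlasenko), and it is this additive closeness to a ball that makes the theorem tractable: one can sandwich $\Omega$ between nearly concentric balls $B_\rho\subset\Omega\subset B_{\rho+c}$ with $c$ universal, use domain monotonicity to get $\Gamma(\Omega)\le\lambda_2(B_\rho)-\lambda_1(B_{\rho+c})$, and then prove two-term asymptotics of $\lambda_1,\lambda_2$ of hyperbolic balls accurate to order $\rho^{-3}$, which avoids analyzing $u_1$ of $\Omega$ altogether and is essentially the route of \cite{nguyen2022fundamental}. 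As written, your text is a plausible program whose central analytic estimate is asserted rather than proved.
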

As $D \rightarrow \infty$, the quantity $\Gamma(\Omega) D(\Omega)^2$ tends to $0$, so this shows the fundamental gap of large horoconvex domains is small (relative the gaps of Euclidean domains). Nonetheless, Nguyen et. al. conjectured that it may be possible to recover some lower bound on the gap in terms of the diameter and the dimension. 

To this end, Theorem \ref{Fundamental gap of Horoconvex-domains} partially establishes this conjecture for $n=2$, with the additional assumption that the diameter is sufficiently small. We also note recent work of Grossi and Provenzano, which showed that for any horoconvex domains in $\mathbb{H}^2$, the principle eigenfunction has a unique non-degenerate critical point \cite{grossi2023critical}. This result gives some partial evidence for log-concavity, since it implies that the level sets must be connected.

\subsection{Spectral Gap Estimates}

We now turn our attention to proving fundamental gap estimates for the problem of the form \eqref{eqn: Weighted eigenfunction equation}. Our strategy to obtain fundamental gap estimates uses an import insight of \cite{singer1985estimate}.
The function $w = \tfrac{u_2}{u_1},$ the ratio of the second and first eigenfunction, satisfies a PDE with Neumann boundary conditions: 
\begin{lemma}[\cite{singer1985estimate}]\label{Ratio of eigenfunctions lemma}
    Consider the function $w = \frac{u_2}{u_1}$. Then $w$ satisfies the equation
    \begin{equation} \label{The gap is the eigenvalue}
         \Delta w +2\nabla \log u_1\cdot \nabla w=-\Gamma\rho w
    \end{equation}
    with Neumann boundary conditions.
\end{lemma}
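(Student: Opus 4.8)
The plan is to verify the equation by a direct computation, using the defining eigenvalue equations for $u_1$ and $u_2$ together with the product rule for the Laplacian. Recall that both eigenfunctions satisfy \eqref{eqn: Weighted eigenfunction equation} with vanishing potential (or, in the general case, we absorb the potential; here I treat the form stated), so $-\Delta u_1 = \lambda_1 \rho u_1$ and $-\Delta u_2 = \lambda_2 \rho u_2$ in $\Omega$, with $u_1 = u_2 = 0$ on $\partial\Omega$, and $u_1 > 0$ inside $\Omega$ so that $w = u_2/u_1$ is well-defined in the interior.

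First I would compute $\Delta(u_1 w)$. Since $u_2 = u_1 w$, the product rule gives $\Delta u_2 = w \Delta u_1 + 2\nabla u_1 \cdot \nabla w + u_1 \Delta w$. Substituting the eigenvalue equations $\Delta u_2 = -\lambda_2 \rho u_2 = -\lambda_2 \rho u_1 w$ and $\Delta u_1 = -\lambda_1 \rho u_1$, this becomes
\begin{equation*}
-\lambda_2 \rho u_1 w = -\lambda_1 \rho u_1 w + 2\nabla u_1 \cdot \nabla w + u_1 \Delta w.
\end{equation*}
Solving for $\Delta w$ and dividing by $u_1$ (valid in the interior where $u_1 > 0$) yields
\begin{equation*}
\Delta w + \frac{2 \nabla u_1 \cdot \nabla w}{u_1} = -(\lambda_2 - \lambda_1) \rho w = -\Gamma \rho w,
\end{equation*}
and rewriting $\nabla u_1 / u_1 = \nabla \log u_1$ gives exactly \eqref{The gap is the eigenvalue}.

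Next I would address the Neumann boundary condition, which is the only subtle point. On $\partial\Omega$ both $u_1$ and $u_2$ vanish, so $w$ extends continuously to the boundary by l'Hôpital / the Hopf lemma: since $u_1 > 0$ inside and $u_1 = 0$ on $\partial\Omega$ with smooth boundary, the Hopf boundary point lemma guarantees $\partial u_1/\partial N < 0$ (inward normal derivative strictly negative on the boundary), so $w$ is smooth up to $\partial\Omega$ and its boundary value equals $(\partial u_2/\partial N)/(\partial u_1/\partial N)$. To get $\partial w/\partial N = 0$, differentiate $u_2 = u_1 w$ twice in the normal direction near a boundary point, using that $u_1, u_2$ and their first tangential derivatives vanish on $\partial\Omega$; matching the normal derivatives forces the $\nabla u_1 \cdot \nabla w$ term to carry the singularity and the $u_1 \partial w/\partial N$ contribution to vanish in the limit, which (after dividing by $\partial u_1/\partial N \neq 0$) gives $\partial w/\partial N = 0$ on $\partial\Omega$. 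This is the step requiring the most care; the interior PDE itself is a routine application of the product rule.

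I expect the main obstacle to be purely the boundary analysis — ensuring $w$ and $\nabla w$ extend regularly to $\partial\Omega$ and that the Neumann condition holds in a rigorous (not merely formal) sense. This relies on elliptic regularity up to the boundary for $u_1, u_2$ and on the Hopf lemma to rule out degeneracy of $\nabla u_1$ on $\partial\Omega$. Once these standard facts are invoked, both the PDE and the boundary condition follow, completing the proof; one may also simply cite \cite{singer1985estimate} for the boundary regularity statement, as the lemma is attributed there.
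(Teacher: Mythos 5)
Your interior computation is exactly the standard argument (the paper itself gives no proof and simply cites \cite{singer1985estimate}): writing $u_2 = u_1 w$, expanding $\Delta(u_1w)$, substituting the two eigenvalue equations and dividing by $u_1>0$ gives \eqref{The gap is the eigenvalue} in $\Omega$. That part is correct and complete.

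The gap is in the Neumann boundary condition, which you correctly identify as the delicate point but do not actually close. Differentiating $u_2 = u_1 w$ twice along the normal and using $u_1=0$ on $\partial\Omega$ gives, at a boundary point,
\begin{equation*}
\partial_N w \;=\; \frac{\partial_N^2 u_2 - w\,\partial_N^2 u_1}{2\,\partial_N u_1},
\qquad w\big|_{\partial\Omega} = \frac{\partial_N u_2}{\partial_N u_1},
\end{equation*}
so nothing "vanishes in the limit" automatically: you must show the numerator is zero, and this requires an input you never use, namely the PDE evaluated on the boundary. Since $u_i=0$ on $\partial\Omega$, the equation gives $\Delta u_i = 0$ there, and writing the Laplacian in Fermi (boundary normal) coordinates, where the tangential second derivatives of $u_i$ vanish along $\partial\Omega$, yields $\partial_N^2 u_i = H\,\partial_N u_i$ with $H$ the mean curvature of $\partial\Omega$; plugging this in makes the numerator $H\partial_N u_2 - w\,H\,\partial_N u_1 = 0$ and hence $\partial_N w = 0$. (An alternative route, which you gesture at with "the $\nabla u_1\cdot\nabla w$ term carries the singularity," is: once $w$ is known to be $C^1$ up to the boundary, $\Delta w + \Gamma\rho w$ is bounded, while $|\nabla \log u_1|$ blows up like $c/\mathrm{dist}(\cdot,\partial\Omega)$ in the normal direction by Hopf, so boundedness of the drift term forces $\partial_N w=0$; but this needs the quotient regularity of $w=u_2/u_1$ up to $\partial\Omega$ to be justified, e.g.\ by writing $u_i$ as the boundary distance times a smooth positive/smooth function, which Hopf alone does not give.) As written, your sketch mixes these two arguments without supplying the missing ingredient in either; adding the identity $\partial_N^2 u_i = H\,\partial_N u_i$ (or the boundedness-of-drift argument with the regularity of $w$ established) completes the proof. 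One further caveat: the domains in the paper are only convex, so $\partial\Omega$ need not be smooth, and both Hopf's lemma and boundary regularity require the usual care (or one cites \cite{singer1985estimate} for this, as the paper does).
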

Hence, estimating the fundamental gap can be reduced to estimating the  first non-trivial weighted Neumann eigenvalue of the operator $-\Delta_g -2 \nabla \log u_1 \cdot \nabla   (\cdot).$ To estimate these, we make use of a result by Andrews and Ni \cite{andrews2012eigenvalue}.
\begin{lemma}[Proposition 3.1 \cite{andrews2012eigenvalue}] \label{Andrews-Ni bound}
Let $\Omega$ be a convex domain in any Riemannian manifold with $\mathrm{Ric}_{i j}+f_{i j} \geq a g_{i j}$ for some $a\geq 0$. Then the second Neumann eigenvalue of the problem
 \[ -\Delta_g u + \langle \nabla u, \nabla f \rangle+ \mu \rho u =0  \]
satisfies the estimate
\[\|\rho\|_\infty \mu_2(\rho) \geq \frac{a}{2}+\frac{\pi^2}{D^2} \]
\end{lemma}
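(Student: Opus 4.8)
The statement to be proved is Lemma \ref{Andrews-Ni bound} (Proposition 3.1 from Andrews–Ni \cite{andrews2012eigenvalue}): for a convex domain $\Omega$ with the Bakry–Émery Ricci bound $\Ric_{ij} + f_{ij} \geq a g_{ij}$, $a \geq 0$, the first nontrivial Neumann eigenvalue of the drift Laplacian with weight $\rho$ satisfies $\|\rho\|_\infty \mu_2(\rho) \geq \tfrac{a}{2} + \tfrac{\pi^2}{D^2}$.

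\medskip

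The plan is to run a modulus-of-continuity / gradient-comparison argument of Andrews–Clutterbuck type adapted to the drift setting, following Andrews–Ni. First I would reduce to the unweighted case: since $\rho \leq \|\rho\|_\infty$ pointwise and $\rho > 0$, the Rayleigh-quotient characterization of $\mu_2(\rho)$ (with weight $\rho$ in the $L^2$ inner product against the measure $e^{-f}\,dV$) gives $\|\rho\|_\infty\, \mu_2(\rho) \geq \mu_2(1)$, where $\mu_2(1)$ is the first nontrivial Neumann eigenvalue of $L = \Delta_g - \nabla f \cdot \nabla$ on $(\Omega, e^{-f}dV)$. So it suffices to show $\mu_2(1) \geq \tfrac{a}{2} + \tfrac{\pi^2}{D^2}$. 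Let $u$ be a corresponding eigenfunction, $Lu = -\mu u$, normalized so that $\min u = -1$, $\max u \in (0,1]$ after rescaling, or more symmetrically so that $u$ takes values in $[-1,1]$ with the range not exceeding what the one-dimensional model allows. The heart of the method is to compare $u$ with the eigenfunction of the one-dimensional model operator: on an interval of length $D$, consider $\varphi'' - T\varphi' \cdot (\text{drift term}) = -\bar\mu \varphi$; concretely one uses the model $v''(s) + \text{(curvature term with }a) = -\mu v$ whose first Neumann eigenvalue on $[-D/2, D/2]$ is exactly $\tfrac{a}{2} + \tfrac{\pi^2}{D^2}$ when one accounts for the $\tfrac{a}{2}$ shift (this is the sharp 1D Bakry–Émery model).

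\medskip

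Next I would set up the gradient comparison. Define the claim that for all $x, y \in \Omega$,
\[
|\nabla u|(x) \leq \psi\big( \text{something involving } u(x), u(y), d(x,y)\big),
\]
or in the cleaner formulation: there is a one-dimensional function $\bar u$ on $[-D/2, D/2]$ solving the model ODE with $\bar u' > 0$, such that for every pair $x,y$,
\[
u(x) - u(y) \leq \bar u\!\left(\tfrac{d(x,y)}{2}\right) - \bar u\!\left(-\tfrac{d(x,y)}{2}\right)
\]
after suitable normalization, and moreover a matching gradient bound $|\nabla u|(x) \leq \bar u'(\bar u^{-1}(u(x)))$. The standard device is the double-variable function
\[
Z(x,y) = u(x) - u(y) - 2\bar u\!\left(\tfrac{d(x,y)}{2}\right)
\]
(with $\bar u$ odd), and one shows $Z \leq 0$ for all $x, y$. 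Suppose not: then $Z$ attains a positive interior maximum at some $(x_0, y_0)$ with $x_0 \neq y_0$. The first obstacle — and I expect this to be the main technical step — is the boundary case: one must rule out the maximum occurring with $x_0$ or $y_0$ on $\partial\Omega$, and this is exactly where convexity of $\Omega$ enters. At a boundary maximum the outward-pointing derivative condition combined with the Neumann condition $\partial_N u = 0$ and the second-fundamental-form sign (convexity $\Rightarrow$ $h \geq 0$) forces $Z$ to increase into the interior, contradicting maximality. This is a careful first-variation-of-distance computation along the minimizing geodesic from $x_0$ to $y_0$ hitting the boundary transversally.

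\medskip

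With the maximum at an interior pair $(x_0, y_0)$, I would use the first-order conditions $\nabla_x Z = \nabla_y Z = 0$, which say $\nabla u(x_0)$ and $\nabla u(y_0)$ are both parallel to the minimizing geodesic $\gamma$ joining them and have the common length $\bar u'\!\big(\tfrac{d}{2}\big)$. Then feed the second-order condition $\Hess Z \leq 0$ (tested against a parallel frame along $\gamma$, together with the normal direction $\dot\gamma$) into the PDE. Using the second variation of arclength and the Bakry–Émery Ricci lower bound $\Ric + \Hess f \geq a g$ to bound the relevant curvature/drift terms from below — precisely the index-form estimate that converts $\Ric + \Hess f \geq ag$ into a bound on $\sum \Hess u(e_i,e_i) - \nabla f \cdot \nabla u$ along the geodesic — one arrives, after combining $L u(x_0) = -\mu u(x_0)$ and $L u(y_0) = -\mu u(y_0)$, at a differential inequality for $\bar u$ that is violated unless $\mu \geq \tfrac{a}{2} + \tfrac{\pi^2}{D^2}$. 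Concretely, the model ODE is chosen so that the inequality $\bar u'' - (\text{drift})\,\bar u' + \mu \bar u \leq 0$ with the sharp boundary data has first eigenvalue exactly $\tfrac{a}{2}+\tfrac{\pi^2}{D^2}$; the contradiction at $(x_0,y_0)$ shows $\mu$ cannot be smaller. Finally, having established the sharp gradient estimate $|\nabla u| \leq \bar u' \circ \bar u^{-1} \circ u$, a standard argument (integrating along a path realizing the oscillation of $u$, or directly from the $Z \leq 0$ inequality applied to a near-diametral pair) shows the range of $u$ is controlled so that $\mu_2(1) \geq \bar\mu_1 = \tfrac{a}{2} + \tfrac{\pi^2}{D^2}$, completing the proof. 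Since this is verbatim Proposition 3.1 of \cite{andrews2012eigenvalue}, I would in the actual write-up simply cite it, but the sketch above is the argument I would reconstruct.
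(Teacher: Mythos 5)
Your opening reduction, $\|\rho\|_\infty\,\mu_2(\rho)\geq \mu_2(1)$ via the Rayleigh quotient over the measure $e^{-f}\,dV$ using $\rho\leq\|\rho\|_\infty$, is exactly the paper's entire proof, which then concludes by citing Proposition 3.1 of Andrews--Ni for the unweighted bound $\mu_2(1)\geq \tfrac{a}{2}+\tfrac{\pi^2}{D^2}$. Your further sketch of the modulus-of-continuity/gradient-comparison argument is just a reconstruction of that cited result (which, as you note, one would cite rather than reprove), so the proposal is correct and takes essentially the same approach as the paper.
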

\begin{proof}
    Note that the second eigenvalue satisfies 
    \begin{align*}
        \mu_2(\rho) &=
   \min _{V\subset H^1, \, \textup{dim}V=2}\max _{u \in V} \frac{\int_{\Omega}|\nabla u|^2 e^{-f}\, dx}{\int_\Omega \rho u^2 e^{-f}\, dx}\\
   &\geq\frac{1}{\|\rho\|_\infty} \min _{V\subset H^1, \, \textup{dim}V=2}\max _{u \in V} \frac{\int_{\Omega}|\nabla u|^2 e^{-f}\, dx}{\int_\Omega u^2 e^{-f}\, dx}  = \frac{\mu_2(1)}{\|\rho\|_\infty}.
    \end{align*}
    The conclusion then follows from \cite{andrews2012eigenvalue}.
\end{proof}
In the case of the fundamental gap problem, we choose $ f = -2 \log u_1.$ Hence, when the Ricci curvature is non-negative, all we need to show is that the first eigenfunction is log-concave.

\subsection{A convexity result for the torsion problem}

The results in this paper fall into a broader class of convexity and quasi-convexity results for the solutions to uniformly elliptic PDEs. There are a large number of such results (see, e.g., \cite{makar1971solution,korevaar1983convex,guan2005convex,steinerberger2022concavity,jia2023remarks} and the references therein). Although the primary focus of this paper is eigenvalue problems, conformal geometry can be used to study other elliptic equations as well. As an example, we demonstrate how conformal change can can be used to obtain convexity results for the torsion problem. 
In 1971, Makar-Limanov \cite{makar1971solution} showed that for convex $\Omega\subset \mathbb 
R^2,$ the solution to \eqref{Torsion problem} is $1/2$-concave (see \cite{ma2012convexity} for a discussion of the higher dimensional problem). However, if one tries to adapt this result to more general geometries, a large number of curvature terms appear in the computation which complicates the analysis. 
In \cite{Korevaar1987}, Korevaar mentions unpublished work of himself and Treisberg which performs this analysis and shows that solutions to the torsion problem are $1/2$-concave in spherical geometry.\footnote{Recent work of Grossi and Provenzano \cite{grossi2023critical} implies that the level sets of the solutions must be connected whenever $\Omega$ is a convex domains in spherical geometry or a horoconvex domain in hyperbolic space.} However, if one uses conformal deformations, we can derive Theorem \ref{Corollary to the torsion problem} almost immediately from \cite{kennington1985power}. 

 \begin{proof}[Proof of Theorem \ref{Corollary to the torsion problem}]
     We let $\beta \geq 1$ be a constant (the original claim follows by choosing $\beta = 1$). The bound of the circumradius allows us to find a rotation of the sphere so that $\Omega$ contains the south pole and such that $\Omega$ is contained in the ball of radius $2\arctan(\tfrac{1}{1+4\beta})$ around the south pole. 
     Since the spherical metric is conformal to the Euclidean metric,
     \begin{align*}
         g_{\mathbb S^2} = \frac{4}{(1+\|x\|^2)^2}g_{\mathbb R^2}.
     \end{align*}
     We then rewrite the torsion problem in terms of stereographic projection from the north pole. Doing so, in view of \eqref{eqn: conformal change laplacian formula}, \eqref{Torsion problem} becomes
     \begin{equation}\label{Torsion problem on sphere}
         \Delta_{\mathbb{R}^2} u  + \frac{4}{(1+\|x\|^2)^2} = 0.
     \end{equation}
Since $\Omega$ is convex as a spherical domain and contains the north pole, an exercise in spherical geometry shows that the image of $\Omega$ under stereographic projection is a convex set. As such, whenever $\rho = \frac{4}{(1+\|x\|^2)^2}$ is a $\beta$-concave function, i.e. $\rho^\beta$ is concave, we appeal to a result by Kennington \cite{kennington1985power} to see that $u^{\tfrac {\beta}{1+2\beta}}$ is a concave function. 
The eigenvalues of the Hessian of $\rho^\beta$ are  \[ -\frac{4^{1+\beta}\beta}{(1 +\|x\|^2)^{1+2\beta}} \quad \textrm{ and } \quad \frac{4^{1+\beta}\beta(-1 + (1+4\beta) \|x\|^2)}{(1 +\|x\|^2)^{2(1+\beta)}}, \] both of which are negative whenever $\|x\|^2<1/(1+4\beta)$. By the assumption on the circumradius, the image of $\Omega$ is contained within this disk and so $u^{\tfrac{\beta}{1+2\beta}}$ is a concave function in stereographic coordinates. Letting $\beta =1,$ the claim follows.
\end{proof}

Note that this result also implies lower bounds on the geodesic curvature of the level sets of $u$. In particular, the level sets of $(\pi_p u)^\frac{1}{3}$ will have non-negative curvature, so we can bound the second fundamental form of the level sets of $u$ in terms of the conformal factor using \eqref{Second fundamental form under conformal change}. Moreover, if we further restrict the circumradius, it is possible to prove stronger convexity assumptions. More precisely, if we assume that the circumradius is at most $2 \arctan( 1/(1+4\beta))$, we can repeat the argument to show that $u^{\tfrac{\beta}{1+2\beta}}$ is concave under stereographic projection. 
Finally, we note that this argument does not require for the geometry to be perfectly spherical (in contrast with the previous known results on this problem), and can easily be generalized to consider $C^2$ deformations of a round $\mathbb{S}^2$ by modifying the weighting function.\footnote{An immediate consequence of the Uniformization Theorem is that a generic perturbation of the Riemann sphere can be written as a conformal deformation.}

\section{Log-concavity of the First Eigenfunction}

\subsection{Log-concavity via the barrier PDE approach}
\label{Barrier PDE section}

In \cite{surfacepaper1}, the first and third named authors (along with Nguyen and Wei) provided a systematic way to apply the continuity method for establishing log-concavity estimates by constructing barrier functions which satisfy a particular differential inequality. In this paper, we present a generalization of this method.

Let us first set some notation and definitions. For a Riemannian manifold $(M^n,g)$, we write the $(1,3)$ curvature tensor of its Levi-Civita connection as \[
R(X,Y)Z = \nabla_X \nabla_Y Z -  \nabla_Y \nabla_X Z - \nabla_{[X,Y]} Z \]
and define $R_X$ to be the $(1,1)$-tensor given by \[
R_X (Y) = R(Y,X) X. 
\]

\begin{definition} \label{Barrier operator definition}
    Given $V, \rho:\Omega\rightarrow \mathbb R$, we let $v = \log u_1$ be the logarithm of the first eigenfunction of the problem \eqref{eqn: Weighted eigenfunction equation}. Then, for any function $b: \Omega \rightarrow \mathbb R$ and a unit vector $X$ (with respect to some metric $g$), the barrier operator $\mathcal{B}$ is the quantity
\begin{align} \label{Barrier operator definition equation}
    \mathcal{B}(b,X) :=& -2b^2 +2\langle \nabla b, \nabla v\rangle -2\textup{tr}\left( R_{X}\circ (\nabla v \otimes \nabla v + \textup{Hess }v)\right)
    \\
&\quad -\nabla _{\nabla v}\Ric(X,X)+2\nabla _{X}\Ric(X, \nabla v)+\Delta b(p) - \lambda\rho_{XX} + V_{XX}, \nonumber
\end{align}
where $\lambda$ denotes the principle eigenvalue of Equation \ref{eqn: Weighted eigenfunction equation} and the curvature and derivative terms are taken with respect to the Levi-Civita connection of $g$.
\end{definition}

Note that this operator is the same as the one in \cite{surfacepaper1}, except for the final two terms which are induced by the variable density and the potential, respectively. This operator appears in a certain maximum principle computation and so we define a condition known as the \emph{barrier criteria}.

\begin{definition} \label{Barrier criteria definition}
A barrier function $b$ satisfies the \emph{barrier criteria} if $\mathcal{B}(b,X) >0$
    whenever $X \in U \Omega$ is a unit vector\footnote{Here, $U\Omega$ is the unit tangent bundle of $\Omega$.} such that the mapping
\begin{align}
 U\Omega \rightarrow \mathbb R, \quad X_q\mapsto  \Hess \,v(X_q,X_q) +b(q)  \label{map}
\end{align} 
achieves a maximum at $X$ and satisfies $\Hess\,v (X,X) + b(p)=0$. 
\end{definition}

As we shall see, the barrier criteria prevents the Hessian of eigenfunction from ever \emph{touching} $b$. In particular, along a one-parameter family of eigenvalue problems, it is impossible for there to be a first time and an interior point $q$ so that  $\Hess \,v(X_q,X_q) = b$. More precisely, we have the following.

\begin{lemma}\label{mainpropposition}
Let $M^n$ be a smooth manifold and consider a one-parameter family of eigenvalue problems
\begin{equation}\label{smooth-family-of-PDEs}
       -\Delta_{g(t)} \varphi + V(t) \varphi = \lambda \rho(t) \varphi,  \quad
       \varphi \vert_{\partial \Omega(t)} \equiv 0
\end{equation}
Here, the metric $g(t)$, the domain $\Omega(t)$, the weighting $\rho(t)$ and the potential $V(t)$ are all allowed to depend on $t$, so long as we assume that the domain $\Omega(t)$ is geodesically convex with respect to the Levi-Civita connection of $g(t)$ for all $t$.
Suppose there is a function $b:\Omega(t) \to \mathbb{R}$ which satisfies the following assumptions:
\begin{enumerate}
    \item $b$ depends smoothly on both $x$ and $t$.
    \item The barrier $b(x,t)$ is uniformly bounded in $t$. \label{Bounded near the boundary assumption}
\item At time $t=0$, \begin{equation} \label{Initial time assumption}
    \Hess \, v(x,0) + b(x,0)\,  g(0) <  0.
\end{equation}
\item And finally, for all $0 \leq t \leq 1$, $b$
satisfies the barrier criteria.
\end{enumerate} 

Then the function $v(x,1)$ satisfies the concavity estimate 
\begin{equation} \Hess \, v(x,1) + b(x,1)\,  g \le  0   \label{concave v} \end{equation} 
on the original domain $\Omega(1)$. 
\end{lemma}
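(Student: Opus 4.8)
The plan is to run a continuity-method argument driven by a maximum principle on the unit tangent bundle, exactly in the spirit of the barrier approach of \cite{surfacepaper1}, with the two new terms ($\lambda\rho_{XX}$ and $V_{XX}$) carried along through the computation. First I would define, for $t\in[0,1]$, the function on the unit tangent bundle
\begin{equation*}
 Z(X_q,t) := \Hess\, v(X_q,X_q) + b(q,t),
\end{equation*}
where $v = v(\cdot,t) = \log u_1(\cdot,t)$ is the log of the first eigenfunction of \eqref{smooth-family-of-PDEs}; here I use that $u_1(\cdot,t)>0$ in the interior and depends smoothly on $t$ by standard elliptic perturbation theory (simplicity of $\lambda_1$). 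By assumption (3) we have $Z(\cdot,0)<0$, and by assumption (2), $b$ is uniformly bounded, which together with boundary behaviour of $v$ (the gradient blows up near $\partial\Omega$ in the right way to make $\Hess v \to -\infty$ in the normal direction, and a tangential argument using geodesic convexity of $\Omega(t)$) guarantees that if $Z$ ever reaches $0$ it does so at an interior point and an interior unit vector. So suppose for contradiction that $t_0\in(0,1]$ is the first time $\sup_{U\Omega} Z(\cdot,t_0) = 0$, attained at some $(X_p,t_0)$ with $\Hess v(X_p,X_p) + b(p,t_0) = 0$.

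The core of the argument is then the standard second-derivative test at this first touching point. I would:
\begin{enumerate}
 \item Extend $X_p$ to a vector field near $p$ by parallel transport along geodesics (so $\nabla X = 0$ at $p$), turning the tangent-bundle maximum into an ordinary interior spatial maximum of $q\mapsto \Hess v(X_q,X_q) + b(q,t_0)$ on $\Omega(t_0)$; the condition that $X_p$ maximizes over the fibre is exactly the hypothesis in Definition \ref{Barrier criteria definition}.
 \item At $p$: first derivatives of $Z(\cdot,t_0)$ vanish, the spatial Hessian of $Z(\cdot,t_0)$ is $\le 0$ (hence $\Delta Z \le 0$), and the $t$-derivative satisfies $\partial_t Z \ge 0$ (since $t_0$ is the first time the sup hits $0$).
 \item Compute $\Delta\big(\Hess v(X,X)\big)$ by commuting derivatives: differentiate the equation $\Delta v + |\nabla v|^2 + V = \lambda\rho$ (the PDE satisfied by $v=\log u_1$) twice in the direction $X$, then use the Bochner-type commutation formula to move the Laplacian inside, producing the curvature terms $\Tr(R_X\circ(\nabla v\otimes\nabla v + \Hess v))$, $\nabla_{\nabla v}\Ric(X,X)$, $\nabla_X\Ric(X,\nabla v)$, together with $-\lambda\rho_{XX} + V_{XX}$ and the quadratic term coming from $|\nabla v|^2$.
 \item Substitute the first-order relations from step 2 ($\nabla(\Hess v(X,X)) = -\nabla b$ at $p$, and $\Hess v(X,X) = -b$ at $p$) to eliminate derivatives of $\Hess v(X,X)$ in favour of $b$, $\nabla b$, $\nabla v$. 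Collecting everything, the inequality $0 \ge \Delta Z = \Delta(\Hess v(X,X)) + \Delta b$ rearranges precisely into $0 \ge -\mathcal{B}(b,X)$, i.e. $\mathcal{B}(b,X)\le 0$, contradicting the barrier criteria (assumption (4)). A small point: the $\partial_t Z\ge 0$ information and any $t$-dependence of $g,\rho,V,b$ must be handled; the cleanest route is to absorb the $t$-derivative with the correct sign so it only helps the inequality, which is why $b$ is allowed to depend on $t$.
\end{enumerate}

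Having ruled out a first interior touching time, $Z(\cdot,t)<0$ persists for all $t\in[0,1]$, and in particular $Z(\cdot,1)<0$ on $\Omega(1)$, which is the claimed estimate \eqref{concave v} (with non-strict inequality, as stated). The step I expect to be the main obstacle is the boundary analysis in step 1 of the first paragraph: one must show that the supremum of $Z$ over $U\Omega(t)$ cannot escape to the boundary — neither to a boundary point with an interior-pointing or tangential vector, nor as a limit. This is where geodesic convexity of $\Omega(t)$ is essential (it controls the second fundamental form sign in the tangential directions so that $\Hess v$ stays suitably negative there), and where the uniform bound on $b$ (assumption (2)) is used to compare against the $-\infty$ behaviour of $\Hess v$ in the normal direction near $\partial\Omega$. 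The interior Bochner computation in step 3 is lengthy but routine; the genuinely delicate part is making the "first touching point is interior" claim rigorous, and I would lean on the corresponding argument in \cite{surfacepaper1}, noting that the new terms $-\lambda\rho_{XX}+V_{XX}$ are bounded and do not affect the boundary behaviour.
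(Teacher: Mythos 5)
Your proposal follows essentially the same route as the paper: a continuity argument in $t$ with a first interior touching time for $\Hess v(X,X)+b$ on the unit tangent bundle, the spatial maximum principle at that point, the Bochner-type commutation of the twice-differentiated equation for $v=\log u_1$, and a contradiction with the barrier criteria, with the boundary escape ruled out by uniform boundedness of $b$ and convexity exactly as in the cited earlier work. Two minor corrections: the equation for $v$ should read $\Delta v + |\nabla v|^2 = V - \lambda\rho$ (your signs are flipped, though the terms $-\lambda\rho_{XX}+V_{XX}$ you report in the end are the correct ones), and no $\partial_t Z \ge 0$ information needs to be invoked or absorbed --- at the fixed time $t_0$ the argument is purely elliptic, so only the spatial first- and second-derivative conditions at the maximum are used.
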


\begin{proof}[Proof of Lemma \ref{mainpropposition}]

By Assumption \ref{Initial time assumption}, we have that \[ \Hess_{g(0)} \, v(x,0) + b_0(x)\,  g(0) <  0. \]
 For the sake of contradiction, suppose that $\Hess_{g(1)} \, v(X,X)_p+b(p)g(1)>0$ for some unit vector $X\in T_p\Omega(1),$ $p\in \Omega(1).$  Using the continuity in $t$ and the fact that the estimate cannot fail at the boundary,
there must be a time $t_0 \in (0,1)$ after which the Hessian bound fails to hold. By Assumption \ref{Bounded near the boundary assumption}, the Hessian bound holds in a neighborhood of the boundary for all time (c.f. Lemma 3.4 of \cite{10.4310/jdg/1559786428}). As such, in order for the inequality to fail there must be an interior point $p$ and a unit vector $X_p\in T_p\Omega(t_0)$ such that 
\begin{align*}
  0= \Hess_{g(t_0)} \, v(X_p,X_p) +b(p)=\max_{Y_q \in U \Omega(t_0) } \left(\Hess_{g(t_0)} \, v(Y_q,Y_q) +b(q) \right).
\end{align*}
 We denote $e_1=X_p$, 
and extend this vector to an orthonormal basis $\{e_j\}$ of $T_p\Omega(t_0)$. 
Let us denote $e_1=X_p$,  Since $e_1$ is the maximal direction of $\textup{Hess}\, v$ at $p$, which is symmetric, so $e_1$ is an eigenvector of $\textup{Hess}\, v$. Since $\{e_j\}$ is an orthonormal basis, we have at $p$, 
\begin{align}
    \textup{Hess}\,v_p(e_1,e_j)= \langle \textup{Hess}\,v_p(e_1),\ e_j \rangle  =0\quad \textup{ for }j =2, \dots, n. \label{maximum-direction-property} 
\end{align}
where we here and in the following write $\Hess$ instead of $\Hess_{g(t_0)}.$
By the maximum principle, we find that
\begin{align}
   e_i(v_{11})(p)+e_i(b)(p)&=0\quad \textup{for all }i =1,\dots, n  \label{gradient-zero} \\
   \Delta ( v_{11} ) (p)+\Delta b(p)&\leq 0.
\end{align}
We now compute $\Delta ( v_{11} ) (p)+\Delta b(p)$. Commuting the indices, we have that 
 \begin{align*}   \sum_iv_{11,ii}=\sum_iv_{ii,11}+2v_{11}\Ric_{11}-2\sum_{ij}v_{ji}R_{1j1i}+\sum_{i}2v_i\Ric_{i1,1}-\sum_iv_i\Ric_{11,i}.
\end{align*}
To compute the right hand side, 
we note that $v$ satisfies the following equation 
\begin{align}\label{vsPDE}
    \|\nabla v\|^2=V-\lambda\rho -\Delta v,
\end{align}
Taking the derivative of both sides of \eqref{vsPDE}, one has that 
\begin{align*}
    \sum_{i}v_{ii,11}+ \lambda \rho_{11} - V_{11}=-2\sum_{i}(v_{i1}^2+v_{i1,1}v_i). 
\end{align*}
This gives that when commuting the indices
\begin{align*}
   v_{i1,1} =v_{11,i}+\sum_jv_jR_{j1i1} 
   =-b_i+\sum_jv_jR_{j1i1}.
\end{align*}
Hence,    
\begin{align*} \sum_{i}v_{ii,11}&=-2\sum_{i}\left(v_{i1}^2-v_ib_i+\sum_j v_iv_jR_{j1i1}\right)\\ &=-2b^2+2\langle \nabla b, \nabla v\rangle -2\sum_{i,j} v_iv_jR_{j1i1}.
\end{align*}

Putting this all together, we find that
\begin{eqnarray*}
0&\geq& \Delta (v_{11}+b)(p)\\
&= & -2b^2+2\langle \nabla b, \nabla v\rangle -2\sum_{i,j} v_iv_jR_{j1i1}-2\sum_{i,j}v_{ij}R_{1j1i}  \\
& &-2b\Ric_{11}-\sum_{j}v_j\Ric_{11,j}+2\sum_{j}v_j\Ric_{1j,1}+\Delta b(p)- \lambda\rho_{11}+ V_{11}\\
&=&-2b^2+2\langle \nabla b, \nabla v\rangle -2\sum_{i,j} R_{j1i1}(v_iv_j+v_{ij})
-\nabla _{\nabla v}\Ric(e_1,e_1) \\
& &+2\nabla _{e_1}\Ric(e_1, \nabla v)+\Delta b(p) - \lambda\rho_{11} + V_{11}
\end{eqnarray*}
which contradicts the fact that $\mathcal{B}(b)>0$.
\end{proof}

\section{Applications of Conformal Deformations}\label{Section: Application}

We now provide a number of application for the previous approach. We start with a simple example, which is to prove Theorem \ref{thm: log-concavity one-point}. In fact, we will prove a generalized version where the potential $V$ may be non-vanishing. Since our goal is to show that the principle eigenfunction is log-concave, we use the barrier $b=0$ and construct a continuity family to apply Lemma \ref{mainpropposition}.

\begin{theorem}  \label{Log concavity estimate: precise version}
    Suppose that $\Omega\subset \mathbb M^n_K$ is a convex domain. Furthermore, suppose that $V$ and $\rho$ satisfy the inequality
    \begin{align}\label{condition-Theorem-log-concavity}
         \nabla^2 (V-\lambda(t) \rho) >  2K(V-\lambda(t) \rho)
     \end{align}
     for all $t\in (0,1)$ where $\lambda(t)$ is the principle eigenvalue of the problem \eqref{smooth-family-of-PDEs} with potential $V(t) = tV$ and weighting function $\rho(t) = t\rho+(1-t).$
    Then the principle Dirichlet eigenfunction of the problem \eqref{eqn: Weighted eigenfunction equation} is log-concave.
\end{theorem}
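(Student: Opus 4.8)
The plan is to apply the continuity method of Lemma \ref{mainpropposition} with the trivial barrier $b \equiv 0$, so that the barrier criteria reduces to a pointwise inequality on the curvature-Hessian terms at a hypothetical first touching point. First I would set up the one-parameter family \eqref{smooth-family-of-PDEs} on the \emph{fixed} domain $\Omega \subset \mathbb{M}^n_K$, with $g(t) \equiv g_{\mathbb{M}^n_K}$, $V(t) = tV$, and $\rho(t) = t\rho + (1-t)$, so that at $t=0$ the problem is the unweighted, potential-free Dirichlet eigenvalue problem on a convex domain in a nonnegatively curved space form. For this initial problem, log-concavity of the principal eigenfunction is classical (Brascamp--Lieb in the Euclidean case, and the known estimates \cite{lee1987estimate,10.4310/jdg/1559786428} in the spherical case), and in fact one has the strict inequality $\Hess v(x,0) < 0$ needed for Assumption \eqref{Initial time assumption}; I would cite this rather than reprove it. Boundedness of $b \equiv 0$ in Assumption \eqref{Bounded near the boundary assumption} is automatic, and convexity of $\Omega$ with respect to $g(t) = g_{\mathbb{M}^n_K}$ holds for all $t$ by hypothesis.

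The substance is verifying the barrier criteria for $b \equiv 0$: at any unit vector $X = e_1$ at an interior point $p$ realizing a maximum of $X_q \mapsto \Hess v(X_q, X_q)$ with $\Hess v(e_1,e_1) = 0$, I must show $\mathcal{B}(0, e_1) > 0$. With $b \equiv 0$ the first three explicit terms of \eqref{Barrier operator definition equation} vanish, and since $\mathbb{M}^n_K$ is Einstein (indeed has parallel Ricci), the two $\nabla \Ric$ terms vanish as well, leaving $\mathcal{B}(0,e_1) = -2\,\mathrm{tr}\big(R_{e_1}\circ(\nabla v \otimes \nabla v + \Hess v)\big) - \lambda \rho_{11} + V_{11}$. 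On a space form, $R_{e_1}(Y) = R(Y,e_1)e_1 = K\big(Y - \langle Y,e_1\rangle e_1\big)$, so $R_{e_1}$ is $K$ times the projection onto $e_1^{\perp}$. Using that $e_1$ is an eigenvector of $\Hess v$ with eigenvalue $0$ (so $\Hess v$ preserves $e_1^\perp$ and $\Hess v|_{e_1^\perp} \le 0$ by maximality) and that $\langle \nabla v, e_1\rangle^2 \ge 0$, the term $-2\,\mathrm{tr}(R_{e_1}\circ \nabla v\otimes\nabla v) = -2K\big(|\nabla v|^2 - \langle\nabla v, e_1\rangle^2\big)$ and $-2\,\mathrm{tr}(R_{e_1}\circ \Hess v) = -2K\,\mathrm{tr}(\Hess v|_{e_1^\perp}) = -2K(\Delta v - v_{11}) = -2K\,\Delta v$; combining with the PDE \eqref{vsPDE}, $\Delta v = V - \lambda\rho - |\nabla v|^2$, these curvature terms collapse to an expression of the form $2K(V - \lambda\rho) + 2K\langle\nabla v,e_1\rangle^2 \ge 2K(V-\lambda\rho)$. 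Hence $\mathcal{B}(0,e_1) \ge 2K(V - \lambda\rho) + (V-\lambda\rho)_{11} = 2K(V-\lambda\rho) + \nabla^2(V-\lambda\rho)(e_1,e_1)$, which is strictly positive precisely by the hypothesis \eqref{condition-Theorem-log-concavity}.

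I would then conclude by invoking Lemma \ref{mainpropposition}: since all four assumptions hold along the family, $\Hess v(x,1) \le 0$ on $\Omega(1) = \Omega$, i.e. $u_1 = \varphi(\cdot,1)$ is log-concave, which is the claim. A couple of technical points deserve care. The first is that $\lambda(t)$ must depend continuously (indeed smoothly) on $t$ and $\rho(t)$ must stay positive — the latter is clear since $\rho(t) = t\rho + (1-t)$ is a convex combination of positive functions, and the former follows from standard perturbation theory for the principal eigenvalue, which is simple. The second is the boundary behavior: one needs the Hessian bound (with $b=0$, just $\Hess v \le 0$, equivalently log-concavity) to persist near $\partial\Omega$ for all $t$; this is exactly the content of the cited Lemma 3.4 of \cite{10.4310/jdg/1559786428}, using that near the boundary $u_1 \to 0$ with nonvanishing gradient and the domain is convex, so $\Hess v \to -\infty$ there.

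The main obstacle is the sign bookkeeping in the curvature computation — making sure that the terms $-2\,\mathrm{tr}(R_{e_1}\circ \Hess v)$ and $-2\,\mathrm{tr}(R_{e_1}\circ\nabla v\otimes\nabla v)$ are handled correctly so that the \emph{bad} contributions ($-2K\Delta v$ in particular) recombine, via \eqref{vsPDE}, with the potential and eigenvalue terms into the curvature-weighted quantity $2K(V-\lambda\rho)$ and a manifestly nonnegative remainder, rather than fighting against the hypothesis. Everything else is a routine instantiation of Lemma \ref{mainpropposition}.
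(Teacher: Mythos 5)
Your proposal follows essentially the same route as the paper: the continuity family $V(t)=tV$, $\rho(t)=t\rho+(1-t)$ on the fixed convex domain, the trivial barrier $b\equiv 0$, the space-form simplification of the curvature terms, and the substitution of \eqref{vsPDE} before invoking Lemma \ref{mainpropposition}. One sign needs fixing in your final recombination: since $\Delta v = V-\lambda\rho-|\nabla v|^2$, the curvature terms collapse to $-2K(V-\lambda\rho)+2K v_1^2$ (not $+2K(V-\lambda\rho)+2Kv_1^2$), so the correct conclusion is $\mathcal{B}(0,e_1)\geq \nabla^2(V-\lambda\rho)(e_1,e_1)-2K(V-\lambda\rho)>0$, which is exactly hypothesis \eqref{condition-Theorem-log-concavity}; the inequality you displayed, $\mathcal{B}(0,e_1)\geq 2K(V-\lambda\rho)+\nabla^2(V-\lambda\rho)(e_1,e_1)$, is not implied by that hypothesis. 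Also, carrying the $t$-dependence through (as the paper does in \eqref{Positivity of barrier equal 0}) gives $\mathcal{B}(0)=t\bigl[\nabla^2(V-\lambda(t)\rho)(e_1,e_1)-2K(V-\lambda(t)\rho)\bigr]+2K\lambda(t)(1-t)+2Kv_1^2$, whose positivity for $t\in(0,1)$ uses $K\geq 0$ and $\lambda(t)>0$ in addition to the hypothesis.
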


\begin{proof}[Proof of Theorem \ref{thm: log-concavity one-point}]

We fix the domain $\Omega$ and the barrier
 $b(x,t)\equiv 0$. To apply Lemma \ref{mainpropposition}, we consider $\rho(t) = (1-t) + t \rho$ and the potential $V(t) =  t V$. In $\mathbb M^n_K$, the barrier operator becomes
\begin{align*}
    \mathcal{B}(0) &=-2\sum_{i=2}^nK(v_i^2+v_{ii})-\lambda\left((1-t) + t \rho \right)_{11}+tV_{11}\\
    &=-2K(\Delta v+|\nabla v|^2-v_{11}-v_1^2)-\lambda t \rho_{11} + t V_{11}\\
    &=-2K(tV-\lambda \left((1-t) + t \rho \right) -v_1^2)-\lambda t \rho _{11}+tV_{11},
\end{align*}
where we used \eqref{vsPDE} in the last equality. 
In the case where $V$ and $\rho$ satisfy the first set of assumptions, these hypotheses imply that 
\begin{equation} \label{Positivity of barrier equal 0}
\mathcal{B}(0)=t\lambda(2K\rho-\rho_{11})+2Kv_1^2+2K\lambda(1-t) +t(V_{11} - 2KV)>0\end{equation}
for any $0<t<1$. We thus conclude the claim from Lemma \ref{mainpropposition}.
\end{proof}

Similarly, this approach can be used to prove log concavity properties of eigenfunctions under conformal change on $\mathbb M^n_K$ (with $K> 0$).

\begin{proof}[Proof of Corollary \ref{cor: conformal nearly-round spheres}]
 
By our assumption, $\Omega$ is convex with respect to $g.$ 
Recall from the calculation \eqref{eqn: Conformal eigenfunction equation, Schrodinger form} $ u  = \tilde u \exp(\tfrac{n-2}{2}\varphi)$ satisfies the equation \begin{equation*}      \begin{cases}
       -\Delta_{\mathbb M^n_K} u +\Bigl[ \tfrac{(n-2)^2}{4}  |\nabla \varphi |^2+\tfrac{n-2}{2}  \Delta \varphi \Bigr]u = \lambda   e^{2\varphi}  u\\
      u \vert_{\partial \Omega} = 0
   \end{cases},
\end{equation*}
where $ \tilde u$ is the first Dirichlet eigenfunction on $\Omega$ with respect to $\tilde g.$

To apply Theorem \ref{Log concavity estimate: precise version}, we let $ V = \tfrac{(n-2)^2}{4}  |\nabla \varphi |^2+\tfrac{n-2}{2}  \Delta \varphi$ and $\rho =  \exp({2\varphi})$ we use the same deformation, $ V(t)  = tV$ and $\rho(t) = t \rho + (1-t).$

 It thus suffices to show that  $\nabla^2 (V-\lambda(t) \rho) - 2K(V-\lambda(t) \rho)>0.$
Note that the eigenvalue $\lambda$ is bounded from below. Indeed, using the Raleigh quotient, we get that 
\begin{align*}
    \lambda(\rho)  &= \inf \frac{\int_\Omega |\nabla u|^2+Vu^2\, dx}{\int_\Omega \rho u^2 \, dx}\\
&\geq \frac{1}{\| \rho\|_\infty } \left(\inf \frac{\int_\Omega |\nabla u|^2\, dx}{\int_\Omega  u^2 \, dx}+\min_\Omega V\right)\\
&= \frac{1}{\|\rho\|_\infty}\left(\lambda_1(\Omega)+\min_\Omega V\right),
\end{align*}
where $\lambda_1(\Omega)$ denotes the first Dirichlet eigenvalue of the usual equation $-\Delta u = \lambda u$ in $\Omega$ with Dirichlet boundary conditions.
On the other hand, Ling \cite{ling2006lower} showed that the first eigenvalue satisfies the estimate
\begin{align*}
    \lambda_1(\Omega) \geq \frac{1}{2}(n-1) K+\frac{\pi^2}{R_{\mathbb{M}^n_K}^2}, 
\end{align*}
where $R_{\mathbb M^n_K}$ is the in-radius of the domain. One can now choose $\varepsilon(n,K)>0$ small enough such that \eqref{condition-Theorem-log-concavity} holds true. 

We therefore infer that $v = \log u $ is concave with respect to $g_{\mathbb M^n_K}.$
Applying Lemma \ref{Andrews-Ni bound}, we obtain 
\begin{align*}
    \Gamma(\Omega) \geq \frac{1}{\|\rho\|_\infty}\left(\frac{\pi^2}{D_{\mathbb M^n_K}}+\frac{K}{2}\right),
\end{align*}
where $D_{\mathbb{M}^n_K}$ denotes the diameter of $\Omega$ with respect to metric $g_{\mathbb M^n_K}.$ Since $D_{\mathbb M^n_K} \leq  D_{\tilde g}/\min \exp(\varphi),$ the claim follows.
\end{proof} 

\begin{remark}\label{remarks-conformal-theorem}
\begin{enumerate}
    \item[$i)$] The assumption on the convexity of $\Omega$ with respect to $g_{\mathbb M^n_K}$ can be rephrased in terms of the metric $\tilde g.$ Observe that by \eqref{principle-curvatures-higher-dimensions.} and by the additional assumption that $\tilde h_{\partial \Omega}\geq \|\tilde\nabla \varphi\|_{\infty, \tilde g}$, we get that for the smallest principle curvature at $p,$ written $\kappa_{\min}(p),$ we have that 
\begin{align*}
    e^{-\varphi}\kappa_{\min}(p) = \tilde \kappa_{\min}(p)-e^{-\varphi}\langle \nabla \varphi, N\rangle \geq \tilde \kappa_{\min}(p) - \| \tilde\nabla \varphi \|_{\infty, \tilde g}>0.
\end{align*} 
\item[$ii)$] One of the assumptions of Corollary \ref{cor: conformal nearly-round spheres} is that the deformation is close in the $C^4$ sense. In dimension $n \geq 3,$ we need this assumption since the potential function $V$ must be small in $C^2$ and $V$ depends on two derivatives of the conformal factor. However, in two dimensions it suffices to assume that the metric is only $C^2$ close to a round metric, as the potential vanishes. This differs from our previous results, where we needed a $C^4$-estimate for the deformation. However, there we were able to prove a fundamental gap estimate for all convex domains with respect to $\tilde g$  whereas here we need a different convexity assumption \cite{surfacepaper1, khan2023modulus}.
\item[$iii)$] It is possible to  establish stronger log-concavity estimates by setting the barrier $b$ to be some non-zero constant. For more details, see \cite{surfacepaper1}.
\end{enumerate}
\end{remark}

\subsection{Fundamental Gap Estimates of Horoconvex Domains}
We now turn our attention to proving Theorem \ref{Fundamental gap of Horoconvex-domains} to obtain gap estimates for horoconvex domains. In hyperbolic space, we cannot apply the barrier argument directly since the curvature terms have an unfavorable sign. Instead, the strategy is to make use of the fact that the metric in $\mathbb H^2$ is conformal to a spherical metric, so we can choose a conformal connection with constant positive curvature. By choosing the models of hyperbolic geometry and spherical geometry carefully, we can find a region in spherical geometry where the weighting function is not too convex so that Theorem \ref{thm: log-concavity one-point} and its corollary apply.

\begin{proof}[Proof of Theorem \ref{Fundamental gap of Horoconvex-domains}]

Consider the Poincar\'e disk model of hyperbolic space, where the metric is given by $g_{\mathbb{H}^2} = \frac{4}{(1-\|x\|^2)^2}  g_{\mathbb{R}^2}$.
Without loss of generality, we suppose that $\Omega \subset B_r(0,g_{\mathbb R^2}),$ where $r$ will be specified below.
The eigenvalue equation
\[  \Delta_{\small\mathbb{H}^2} u + \lambda_1 u = 0 \quad \textup{in }\Omega\quad \&\quad
u  = 0 \quad \textup{on } \partial \Omega, \]
using \eqref{eqn: conformal change laplacian formula} becomes
\[  \Delta _{\mathbb R^2} u +  \frac{4}{(1 - \|x\|^2)^2}\lambda u = 0\quad \textup{in }\Omega \quad \&\quad u = 0 \quad \textup{on }\partial \Omega, \]
i.e. this an equation of the form \eqref{eqn: Weighted eigenfunction equation} with  $ \overline\rho =\frac{4}{(1-\|x\|^2)^2}$, $V \equiv 0.$ Note that $\overline \rho$ has positive eigenvalues, so that we cannot apply Theorem \ref{thm: log-concavity one-point} directly in flat geometry. To circumvent this issue, 
we then change the metric conformally again, writing \[g_{\mathbb{R}^2} = \frac{(R^2 +\|x\|^2 )^2}{4 R^4}g_{\mathbb{M}^2_{K}},\] where $K = \tfrac{1}{R^2}$ and $R>0$ is a constant yet to be determined.\footnote{This is the spherical metric (of radius $R$) in stereographic coordinates.
}
In this new metric, using again \eqref{eqn: conformal change laplacian formula}, the equation becomes 
\begin{align*}
    -\Delta _{\mathbb M^2_K}  u = \frac{(R^2+\|x\|^2)^2}{R^4(1-\|x\|^2)^2} \lambda u.
\end{align*} 
 In order to apply Theorem \ref{thm: log-concavity one-point}, we must verify \eqref{eq: simple example} for $\rho (x) = \tfrac{(R^2+\|x\|^2)^2}{R^4(1-\|x\|^2)^2}$ with respect to the Levi-Civita connection of $\mathbb M^n_K$. In addition, we need to verify that $\Omega$ is convex with respect to $g_{\mathbb M^2_K}.$ We therefore divide this into two parts.\\
\textit{Step 1.}\footnote{A derivation of the calculations in this section can be found in the following \href{https://www.wolframcloud.com/obj/gabekhan/Published/Hessian in Spherical Geometry using disk model.nb}{Mathematica notebook}.}
To compute the Hessian of the weighting function $\rho $ we use \eqref{eqn: conformal change of Hessian}. Thus we calculate  
\begin{align*}
    \nabla_{\mathbb R^2} \rho = \frac{4(1+R^2)(R^2+\|x\|^2)}{(1-\|x\|^2)^2}x
\end{align*}
and
\begin{align*}
    \partial _1^2\rho &=  \left(\frac{4 (1 + R^2) (3 x_1^4 + x_2^2 - x_2^4 + R^2 (1 + 5 x_1^2 - x_2^2) + 
   x_1^2 (3 + 2 x_2^2))}{R^4(1-\| x\|^2)^4}\right) ,\\
    \partial _2^2\rho &= \left(\frac{4 (1 + R^2) (-((-1 + x_1^2) (R^2 + x_1^2)) + (3 + 5 R^2 + 2 x_1^2) x_2^2 + 
   3  x_2^4)}{R^4(1-\|x\|^2)^4}\right) ,\\
     \partial _1 \partial _2\rho &= \left(\frac{8 (1 + R^2) x_1 x_2 (1 + 3 R^2 + 2 \|x\|^2)}{R^4(-1 + \|x\|^2)^4}\right),
\end{align*}

Setting \begin{align*}
    \varphi= \log \left(\frac{2 R^2}{R^2 +\|x\|^2 }\right),
\end{align*}
the conformal factor is $\exp(2\varphi)=\tfrac{4 R^4}{(R^2 +\|x\|^2 )^2}.$ It is straightforward to calculate
\begin{align*}
    -2\nabla \rho \otimes \nabla \varphi + (\nabla \rho \cdot \nabla  \varphi) I_2 = \frac{8(1+R^2)}{(1-\|x\|^2)^3}\begin{pmatrix}
        -x_1^2+x_2^2 &-2x_1x_2\\
        -2x_1x_2 & x_1^2-x_2^2
    \end{pmatrix}.
\end{align*}
We thus conclude 
\begin{align*}
    &\Hess_{\mathbb M^2_K} \rho\\
    &= \frac{4 (1 + R^2)}{(1-\|x\|^2)^4}\begin{pmatrix}
        5 x_1^2 - x_2^2 + R^2 (1 + 5 x_1^2 - x_2^2) + \|x\|^4 &
         6(1+R^2)x_1x_2 \\ 6(1+R^2)x_1x_2
         & 5x_2^2-x_1^2+R^2(1+5x_2^2-x_1^2)+\|x\|^4  
    \end{pmatrix}.
\end{align*}
Our goal now is to verify \eqref{eq: simple example}, i.e., to show that 
\begin{align}\label{Hess-condition}
    \Hess_{\mathbb M^2_K} \rho \leq 2 K \rho g_{\mathbb M^2_K}.
\end{align}
The right hand side becomes 
\begin{align*}
    2K\rho g = \frac{2}{R^2}\frac{4}{(1-\|x\|^2)^2}I_2.
\end{align*}
To verify \eqref{Hess-condition}, we calculate the eigenvalues of the left-hand side of \eqref{Hess-condition} and find that 
\begin{align*}
    \mu _1 = \frac{4(1+R^2)(R^2-\|x\|^2)}{R^4(1-\|x\|^2)^3}, \quad \mu_2 = \frac{4(1+R^2)(\|x\|^2  (5 + \|x\|^2) + R^2 (1 + 5\|x\|^2))}{R^4(1-\|x\|^2)^4}.
\end{align*}
Thus \eqref{Hess-condition} is equivalent to 
\begin{align*}
    \max\{\mu_1,\mu_2\} \leq \frac{2}{R^2}\frac{4}{(1-\|x\|^2)^2}.
\end{align*}
 Note also that $\mu_1,\mu_2$ depend only on $r = \|x\|$, so we can rewrite this inequality in terms of $r$. Doing so, the desired inequality becomes
 \begin{align*}
     \max \{\mu_1(r),\mu_2(r)\}\leq \frac{2}{R^2}\frac{4}{(1-r^2)^2}.
 \end{align*}
This is equivalent to
\begin{align*}
  \underbrace{ \frac{(1+R^2)(R^2-r^2)}{R^2(1-r^2)}}_{=:\tilde \mu_1(r)},  \underbrace{\frac{(1+R^2)(r^2  (5 + r^2) + R^2 (1 + 5r^2))}{R^2(1-r^2)^2}}_{=:\tilde \mu_2(r)}\leq 2.
\end{align*}

Taking $R<1$, we note that
\begin{align*}
  \frac{\partial }{\partial r}\frac{R^2-r^2}{1-r^2}= \frac{2r(R^2-1)}{(1-r^2)^2}<0.
\end{align*}
Since, $\tilde \mu_1(0) <2,$ this implies that $\tilde{\mu}_1(r) <2 $ for all $r<1.$ 

Therefore, we must now establish the inequality for the second eigenvalue. We can rearrange the inequality
\begin{align*}
    \frac{(1+R^2)(r^2  (5 + r^2) + R^2 (1 + 5r^2))}{R^2(1-r^2)^2} <2
\end{align*}
as a quadratic in $r^2$. Doing so, we see that the inequality holds whenever
\begin{align*}
    r^2< \frac{-5-5R^4 -14R^2+(1+R^2)\sqrt{25+94R^2+25R^4}}{2-2R^2}.
\end{align*}
The right hand side of this inequality vanishes when $R$ is zero and is maximized for $R = \frac{\sqrt{7-\sqrt{33}}}{2} \approx 0.560232.$ 
As such, we choose this value as the radius of our sphere. Doing so, the inequality holds whenever 
\begin{align*}
    r^2 < 0.0217494, \quad \textup{i.e. } r< 0.147477.
\end{align*}
In other words, the desired inequality holds within a ball of radius $r=0.147477$ with respect to $g_{\mathbb R^2}$. So we must assume that the domain $\Omega$ has hyperbolic circumradius at most $C= 2\textup{arctanh}(r) = 0.297121.$ We then appeal to a result of Dekster \cite{dekster1995jung} to see that any domain of circumradius greater $0.297121$ must have diameter greater than
$ D_{\max{}} = 2\textup{arcsinh}( \sqrt{3}/2 \sinh(C)) \approx 0.516475$. Solving for $D_{\max{}}$ explicitly, we find that for any domain $\Omega_\mathbb{H}$ whose diameter is at most $2 \textup{arccsch}(2 \sqrt{11/3} )$, it is possible to find a M\"obius transformation so that it is contained entirely within the ball $B_r(0, g_{\mathbb R^2})$.

\textit{Step 2.}
We now verify that $\Omega$ is convex with respect to $g_{\mathbb M^2_K}.$ 
To see this, note that the geodesic curvature of $\partial \Omega$ for a horoconvex $\Omega \subset B_r(0, g_{\mathbb R ^2})$ has curvature $\kappa_{\mathbb R^2} \geq \frac{1}{\left(\tfrac 12 +r\right)^2}$ with respect to $\mathbb R^2.$ Indeed, for each $p\in \partial \Omega,$ there exists a horocycle tangent to $\Omega,$ and $\Omega$ must be contained within that horocycle. Hence, the geodesic curvature of $\partial \Omega$ is greater than or equal to the curvature of that horocycle. To show that the geodesic curvatures $\kappa_{\mathbb M^2_K}$ are positive, we use \eqref{principle-curvatures-higher-dimensions.}: 
\begin{align*}
    \kappa_{\mathbb M^2_K} &\geq  e^{-\varphi}\left(\kappa_{\mathbb R^2}-\|\nabla \varphi\|_{\infty}\right)\\
    & \geq e^{-\varphi}\left(\frac{1}{(\tfrac 12+r)^2}-\frac{\|x\|}{R^2+\|x\|^2}\right)\\
    &\geq e^{-\varphi}\left(\frac{1}{(\tfrac 12+r)^2}-\frac{r}{R^2+r^2}\right)\\
    &\geq e^{-\varphi}\left(\frac{1-2r}{(\tfrac 12+r)^2}\right) >0,
\end{align*}
where in the first inequality, we used Cauchy-Schwarz.
In the second inequality, we used the fact that the Euclidean curvature of horocycles is greater or equal than $1/(\tfrac 12+r)^2.$ In the third inequality, we used that the monotonicity of the function $s \mapsto \tfrac{s}{R
^2+s^2}$ for small $s.$ In the fourth inequality, we used that $R>\tfrac{1}{2}$ and that $(\tfrac 12+ r)^2 \leq 2(\tfrac14+r^2).$ This finishes Step 2.\footnote{For a demonstration of how the horocycles contain the spherical geodesics, we have written a  \href{https://www.geogebra.org/calculator/p8s7f9eg}{Geogebra notebook}.}

\begin{figure}
    \centering
    \includegraphics[width=.8\linewidth]{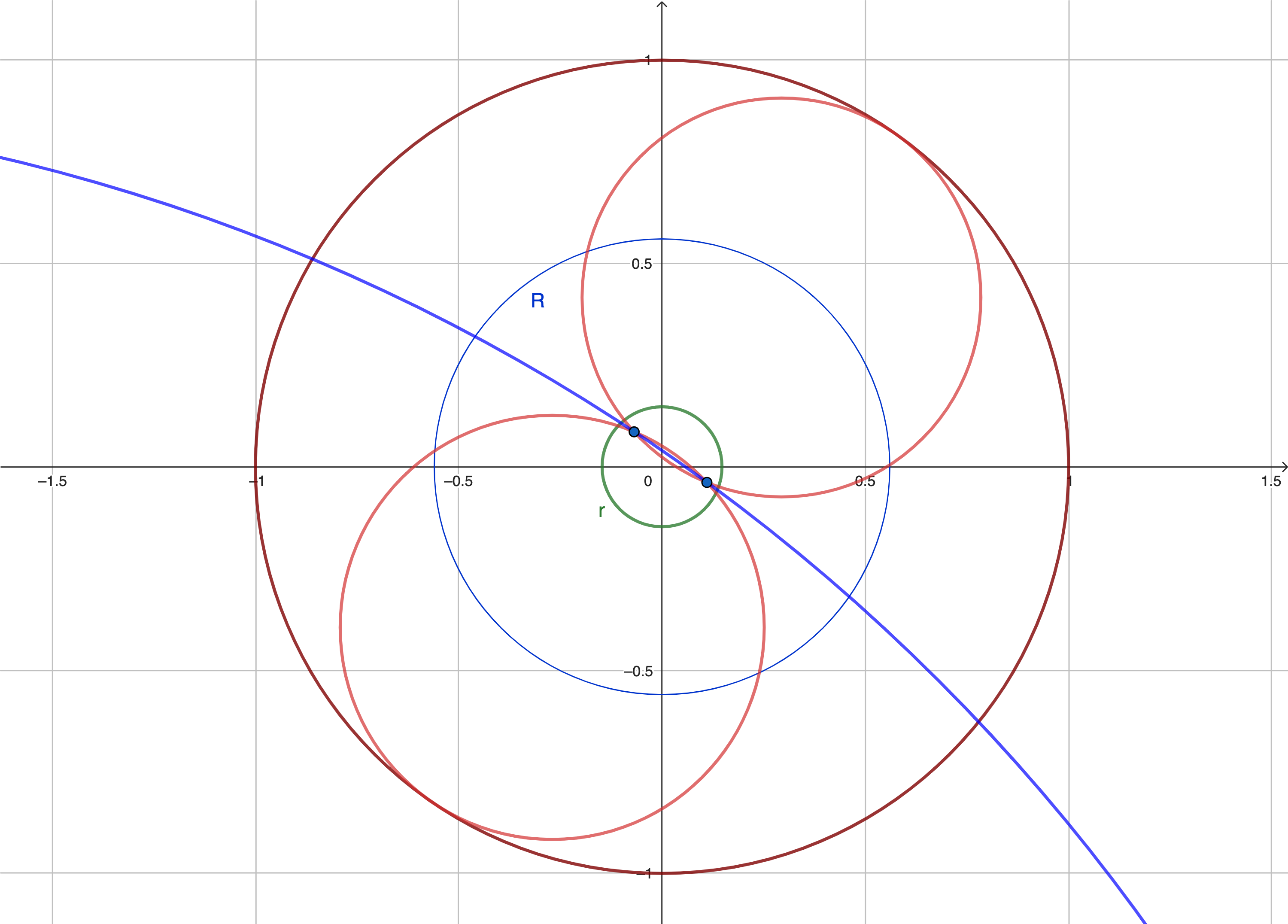}
    \caption{In this figure, the blue spherical geodesic between the two points is contained within the two red horocycles. The previous argument shows that this is always the case when the points are contained within the green circle}
    \label{fig:Horocycle lemma}
\end{figure}

  In view of Step 1 and Step 2, Theorem \ref{thm: log-concavity one-point} implies that its principle eigenfunction is log-concave with respect to $g_{\mathbb M^2_K}$. Therefore, Corollary \ref{Gap-estimate} gives that \begin{equation} \label{Horoconvex gap in terms of spherical diameter}
    \Gamma(\Omega) \geq \frac{1}{\|\rho\|_\infty}\left(\frac{\pi^2}{ (D_{\mathbb M^2_K})^2}+\frac{1}{2R^2}\right)
\end{equation}
where $D_{\mathbb M^2_K}$ is the diameter of the domain in spherical geometry and $1/R^2$ is the sectional curvature of the sphere. Note that, $D_{\mathbb M^2_K}\leq D_{\mathbb H^2}$ since the conformal factor is greater or equal than $1$ and we can substitute back our values for $R$ and bounds on the weighting function to get the gap estimate stated in the introduction.

\end{proof}

\subsection{Remarks on Theorem \ref{Fundamental gap of Horoconvex-domains}}

\label{Discussion-on-horoconvexity}

Let us make several comments on the previous theorem and its proof.

\begin{remark}
It is possible to relax the hypotheses of the domains to satisfy weaker convexity properties rather than horoconvexity. Indeed, all that is needed is for the domain is convex in terms of the spherical connection.
\end{remark}

In particular, given any $\alpha>0$, we can find a diameter $D(\alpha)$ so that any domain with diameter at most $D(\alpha)$ and whose second fundamental form (with respect to the hyperbolic metric) is larger or equal than $\alpha >0$ will be convex with respect to $g_{\mathbb M^2_K}$ (see Step 2 of proof of Theorem \ref{Fundamental gap of Horoconvex-domains}). For such domains, the preceding argument shows that the fundamental gap is greater than almost $\tfrac{\pi^2}{C D^2}$, for some constant $C>1,$ approaching $1$ when the diameter approaches $0.$ In other words, we can consider the family of domains 
\[ \mathcal{C}_{D,\alpha} = \{\Omega \subset \mathbb H^2: \textup{diam}(\Omega) \leq D, \, \kappa_{\partial \Omega}\geq \alpha\}\]
 and the quantity
\[\Gamma(\mathcal{C}_{D,\alpha})=\inf_{\Omega \in \mathcal{C}_{D,\alpha}} \Gamma(\Omega) D(\Omega)^2.  \]
Our arguments show that
\[\lim_{\alpha \to 0^+} \lim_{D \to 0^+} \Gamma(\mathcal{C}_{D,\alpha}) \geq \pi^2. \]
On the other hand, \cite{bourni2022vanishing} implies that
\[ \lim_{\alpha \to  0^+} \inf_{\Omega \in \mathcal{C}_{D,\alpha}} \Gamma(\Omega) =0 \]
for all $D>0,$ and so  
\[ \lim_{D \to 0^+} \lim_{\alpha \to 0^+}  \Gamma(\mathcal{C}_{D,\alpha}) =0. \]

This shows an interesting dichotomy in that similar types of domains can have very different sizes of fundamental gap.

\subsubsection{Finding conformal factors}

 At first, the proofs of Theorem \ref{Fundamental gap of Horoconvex-domains} and Theorem \ref{Corollary to the torsion problem} might seem somewhat ad hoc. Therefore, let us conclude this paper with some intuition for how to find these results and suggestions for how to adapt them to other settings. The general philosophy throughout is to find a connection where the analysis simplifies as much as possible.

For the eigenfunction problem, connections with constant positive curvature are well-suited, since the curvature terms in \eqref{Barrier operator definition equation} simplify greatly. To take advantage of this simplification, we used stereographic projection in the proof of Theorem \ref{Fundamental gap of Horoconvex-domains} to relate the problem to spherical geometry. In this proof, we also needed a conformal model of hyperbolic space. The use of the disk model was done to further simplify the analysis, since the weighting function will be radial. However, a similar argument can be done with the half-plane model (or any other conformal model for hyperbolic geometry). This choice essentially equivalent to choosing the conformal factor and by choosing the domain in a more refined way, it might be possible to consider larger horoconvex domains.

\bibliography{references}

\newcommand{\etalchar}[1]{$^{#1}$}
\begin{thebibliography}{RORWW23}

\bibitem[AB89]{ashbaugh}
Mark~S. Ashbaugh and Rafael Benguria.
\newblock Optimal lower bound for the gap between the first two eigenvalues of
  one-dimensional {S}chr\"odinger operators with symmetric single-well
  potentials.
\newblock {\em Proceedings of the American Mathematical Society},
  105(2):419--424, 1989.

\bibitem[AC11]{andrews2011proof}
Ben Andrews and Julie Clutterbuck.
\newblock Proof of the fundamental gap conjecture.
\newblock {\em Journal of the American Mathematical Society}, 24(3):899--916,
  2011.

\bibitem[AN12]{andrews2012eigenvalue}
Ben Andrews and Lei Ni.
\newblock Eigenvalue comparison on {B}akry-{E}mery manifolds.
\newblock {\em Communications in Partial Differential Equations},
  37(11):2081--2092, 2012.

\bibitem[BCN{\etalchar{+}}21]{bourni2021explicit}
Theodora Bourni, Julie Clutterbuck, Xuan~Hien Nguyen, Alina Stancu, Guofang
  Wei, and Valentina-Mira Wheeler.
\newblock Explicit fundamental gap estimates for some convex domains in
  $\mathbb{H}^2$.
\newblock {\em Mathematical Research Letters}, 28(5):1319--1336, 2021.

\bibitem[BCN{\etalchar{+}}22]{bourni2022vanishing}
Theodora Bourni, Julie Clutterbuck, Xuan~Hien Nguyen, Alina Stancu, Guofang
  Wei, and Valentina-Mira Wheeler.
\newblock The vanishing of the fundamental gap of convex domains in {$\mathbb
  {H}^n$}.
\newblock {\em Ann. Henri Poincar\'{e}}, 23(2):595--614, 2022.

\bibitem[BL76]{brascamp1976extensions}
Herm~Jan Brascamp and Elliott~H Lieb.
\newblock On extensions of the {B}runn-{M}inkowski and {P}r{\'e}kopa-{L}eindler
  theorems, including inequalities for log concave functions, and with an
  application to the diffusion equation.
\newblock {\em Journal of functional analysis}, 22(4):366--389, 1976.

\bibitem[CF85]{Caffarelli-Friedman}
Luis~A. Caffarelli and Avner Friedman.
\newblock {Convexity of solutions of semilinear elliptic equations}.
\newblock {\em Duke Mathematical Journal}, 52(2):431 -- 456, 1985.

\bibitem[CGI{\etalchar{+}}00]{chanillo2000symmetry}
Sagun Chanillo, Daniel Grieser, Masaki Imai, Kazuhiro Kurata, and Isamu
  Ohnishi.
\newblock Symmetry breaking and other phenomena in the optimization of
  eigenvalues for composite membranes.
\newblock {\em Communications in Mathematical Physics}, 214:315--337, 2000.

\bibitem[CM90]{cox1990extremal}
Steven~J Cox and Joyce~R McLaughlin.
\newblock Extremal eigenvalue problems for composite membranes, {I}.
\newblock {\em Applied Mathematics and Optimization}, 22:153--167, 1990.

\bibitem[CWY23]{cho2023probabilistic}
Gunhee Cho, Guofang Wei, and Guang Yang.
\newblock Probabilistic method to fundamental gap problems on the sphere.
\newblock {\em arXiv preprint arXiv:2310.02808}, 2023.

\bibitem[Dek95]{dekster1995jung}
Boris~V Dekster.
\newblock The {J}ung theorem for spherical and hyperbolic spaces.
\newblock {\em Acta Mathematica Hungarica}, 67(4):315--331, 1995.

\bibitem[GLL16]{Gong-Li-Luo2016}
Fuzhou Gong, Huaiqian Li, and Dejun Luo.
\newblock A probabilistic proof of the fundamental gap conjecture via the
  coupling by reflection.
\newblock {\em Potential Anal.}, 44(3):423--442, 2016.

\bibitem[GM05]{guan2005convex}
Pengfei Guan and Xinan Ma.
\newblock Convex solutions of fully nonlinear elliptic equations in classical
  differential geometry.
\newblock {\em Contemporary Mathematics}, 367:115--128, 2005.

\bibitem[GP23]{grossi2023critical}
Massimo Grossi and Luigi Provenzano.
\newblock On the critical points of semi-stable solutions on convex domains of
  {R}iemannian surfaces.
\newblock {\em Mathematische Annalen}, pages 1--24, 2023.

\bibitem[JMS23]{jia2023remarks}
Xiaohan Jia, Xi-Nan Ma, and Shujun Shi.
\newblock Remarks on convexity estimates for solutions of the torsion problem.
\newblock {\em Science China Mathematics}, 66(5):1003--1020, 2023.

\bibitem[Ken85]{kennington1985power}
Alan~U Kennington.
\newblock Power concavity and boundary value problems.
\newblock {\em Indiana University Mathematics Journal}, 34(3):687--704, 1985.

\bibitem[KN24]{khan2022negative}
Gabriel Khan and Xuan~Hien Nguyen.
\newblock Negative curvature constricts the fundamental gap of convex domains.
\newblock {\em Ann. Henri Poincar\'{e}}, 2024.

\bibitem[KNTW22]{surfacepaper1}
Gabriel Khan, Xuan~Hien Nguyen, Malik Tuerkoen, and Guofang Wei.
\newblock Log-concavity and fundamental gaps on surfaces of positive curvature.
\newblock {\em arXiv preprint arXiv:2211.06403}, 2022.

\bibitem[Kor83]{korevaar1983convex}
Nicholas~J Korevaar.
\newblock Convex solutions to nonlinear elliptic and parabolic boundary value
  problems.
\newblock {\em Indiana University mathematics journal}, 32(4):603--614, 1983.

\bibitem[Kor87]{Korevaar1987}
Nicholas~J. Korevaar.
\newblock {\em Convexity Properties of Solutions to Elliptic P.D.E.'S}, pages
  115--121.
\newblock Springer New York, New York, NY, 1987.

\bibitem[KTW23]{khan2023modulus}
Gabriel Khan, Malik Tuerkoen, and Guofang Wei.
\newblock Modulus of concavity and fundamental gap estimates on surfaces.
\newblock {\em arXiv preprint arXiv:2306.06053}, 2023.

\bibitem[Lin06]{ling2006lower}
Jun Ling.
\newblock A lower bound of the first {D}irichlet eigenvalue of a compact
  manifold with positive {R}icci curvature.
\newblock {\em International Journal of Mathematics}, 17(05):605--617, 2006.

\bibitem[LW87]{lee1987estimate}
Yng-Ing Lee and Ai~Nung Wang.
\newblock Estimate of $\lambda_2 - \lambda_1$ on spheres.
\newblock {\em Chinese Journal of Mathematics}, pages 95--97, 1987.

\bibitem[ML71]{makar1971solution}
Leonid~Grigor'evich Makar-Limanov.
\newblock Solution of {D}irichlet's problem for the equation {$\Delta u=- 1$}
  in a convex region.
\newblock {\em Mathematical Notes of the Academy of Sciences of the USSR},
  9:52--53, 1971.

\bibitem[MSY12]{ma2012convexity}
Xi-Nan Ma, Shujun Shi, and Yu~Ye.
\newblock The convexity estimates for the solutions of two elliptic equations.
\newblock {\em Communications in Partial Differential Equations},
  37(12):2116--2137, 2012.

\bibitem[NSW22]{nguyen2022fundamental}
Xuan~Hien Nguyen, Alina Stancu, and Guofang Wei.
\newblock The fundamental gap of horoconvex domains in $\mathbb{H}^n$.
\newblock {\em International Mathematics Research Notices},
  2022(20):16035--16045, 2022.

\bibitem[Ogi67]{ogiue1967theory}
Koichi Ogiue.
\newblock Theory of conformal connections.
\newblock In {\em Kodai Mathematical Seminar Reports}, volume~19, pages
  193--224. Department of Mathematics, Tokyo Institute of Technology, 1967.

\bibitem[RORWW23]{ramos2023integral}
Xavier Ramos~Oliv\'e, Christian Rose, Lili Wang, and Guofang Wei.
\newblock Integral {R}icci curvature and the mass gap of {D}irichlet
  {L}aplacians on domains.
\newblock {\em Mathematische Nachrichten}, 296(8):3559--3578, 2023.

\bibitem[Shi89]{shih1989counterexample}
Ying Shih.
\newblock A counterexample to the convexity property of the first eigenfunction
  on a convex domain of negative curvature.
\newblock {\em Communications in Partial Differential Equations},
  14(7):867--876, 1989.

\bibitem[Ste22]{steinerberger2022concavity}
Stefan Steinerberger.
\newblock On concavity of solutions of the nonlinear {P}oisson equation.
\newblock {\em Archive for Rational Mechanics and Analysis}, 244(2):209--224,
  2022.

\bibitem[SWW19]{10.4310/jdg/1559786428}
Shoo Seto, Lili Wang, and Guofang Wei.
\newblock {Sharp fundamental gap estimate on convex domains of sphere}.
\newblock {\em Journal of Differential Geometry}, 112(2):347 -- 389, 2019.

\bibitem[SWYY85]{singer1985estimate}
I.M. Singer, Bun Wong, Shing-Tung Yau, and Stephen S.-T. Yau.
\newblock An estimate of the gap of the first two eigenvalues in the
  {S}chr{\"o}dinger operator.
\newblock {\em Annali della Scuola Normale Superiore di Pisa-Classe di
  Scienze}, 12(2):319--333, 1985.

\bibitem[Uhl82a]{uhlenbeck1982connections}
Karen~K Uhlenbeck.
\newblock Connections with $l^p$ bounds on curvature.
\newblock {\em Communications in Mathematical Physics}, 83:31--42, 1982.

\bibitem[Uhl82b]{uhlenbeck1982removable}
Karen~K Uhlenbeck.
\newblock Removable singularities in {Yang-Mills} fields.
\newblock {\em Communications in Mathematical Physics}, 83:11--29, 1982.

\bibitem[vdB83]{vandenBerg}
M.~van~den Berg.
\newblock On condensation in the free-boson gas and the spectrum of the
  {L}aplacian.
\newblock {\em J. Statist. Phys.}, 31(3):623--637, 1983.

\bibitem[Wan00]{wang2000estimation}
F-Y Wang.
\newblock On estimation of the {D}irichlet spectral gap.
\newblock {\em Archiv der Mathematik}, 75:450--455, 2000.

\bibitem[Yau86]{yauconjecture}
Shing~Tung Yau.
\newblock {\em Nonlinear analysis in geometry. {Lectures} given at the
  {ETH}-{Z{\"u}rich} under the sponsorship of the {International}
  {Mathematical} {Union}}, volume~33 of {\em Monogr. Enseign. Math.}
\newblock L'Enseignement Math{\'e}matique, Universit{\'e} de Gen{\`e}ve,
  Gen{\`e}ve, 1986.

\bibitem[YZ86]{yu1986lower}
Qi~Huang Yu and Jia~Qing Zhong.
\newblock Lower bounds of the gap between the first and second eigenvalues of
  the schr{\"o}dinger operator.
\newblock {\em Transactions of the American Mathematical Society},
  294(1):341--349, 1986.

\end{thebibliography}
\bibliographystyle{alpha}
\end{document}